\newcommand{\red}[1]{{\bf\color{red}{#1}}}
\theoremstyle{plain}
\newtheorem{theorem}{Theorem}
\newtheorem{proposition}{Proposition}
\theoremstyle{definition}
\newtheorem{definition}{Definition}
\newtheorem{example}{Example}
\newtheorem{remark}{Remark}
\numberwithin{equation}{section}
\title{
\textbf{Integral and measure-turnpike properties for infinite-dimensional optimal control systems}
}
\renewcommand{\thefootnote}{\arabic{footnote}}
\author{\renewcommand{\thefootnote}{\arabic{footnote}} Emmanuel Tr\'elat\,\footnotemark,\;\;
Can Zhang\,\footnotemark$\phantom{\;}^,$\footnotemark}
\begin{document}
\maketitle

\renewcommand{\thefootnote}{\arabic{footnote}}
\footnotetext[1]{Sorbonne Universit\'es, UPMC Univ Paris 06, CNRS UMR 7598, Laboratoire Jacques-Louis Lions, F-75005, Paris, France}
\footnotetext[2]{School of Mathematics and Statistics, Wuhan University, 430072 Wuhan, China}
\footnotetext[3]{Sorbonne Universit\'es, UPMC Univ Paris 06, CNRS UMR 7598, Laboratoire Jacques-Louis Lions, F-75005, Paris, France}

\let\thefootnote\relax\footnotetext{Email address: emmanuel.trelat@upmc.fr (Emmanuel Tr\'elat), zhangcansx@163.com (Can Zhang)
}

\maketitle

\begin{abstract}

We first derive a general integral-turnpike property around a set 
for infinite-dimensional non-autonomous optimal control problems with any possible terminal state constraints, under some appropriate assumptions. Roughly speaking, the integral-turnpike property means that the time average of the distance from any optimal trajectory to the turnpike set converges to zero, as the time horizon tends to infinity.
Then, we establish the measure-turnpike property for strictly dissipative optimal control systems, with state and control constraints. The measure-turnpike property,
which is slightly stronger than the integral-turnpike property,
 means that any optimal (state and control) solution remains essentially, along the time frame, close to an optimal solution of an associated static optimal control problem, except along a subset of times that is of small relative Lebesgue measure as the time horizon is large. 
Next, we prove that strict strong duality, which is a classical notion in optimization, implies strict dissipativity, and measure-turnpike.
Finally,  we conclude the paper with several comments and open problems.

 \end{abstract}

\bigskip

\textbf{Keywords}. Measure-turnpike,   strict dissipativity, strong duality,  state and control constraints.

\bigskip

\textbf{AMS subject classifications}. 49J20, 49K20, 93D20.

\section{Introduction}
We start this paper  with an intuitive idea in general terms. 
Consider the optimal control problem
\begin{equation*}\boxed{
\begin{split}
& \inf \frac{1}{T}\int_0^Tf^0(y(t),u(t))\,dt,\\
& \text{subject to}\;\; \dot{y}(t)=f(y(t),u(t)), \quad t\in[0,T], \\[2mm]
\end{split}}
\end{equation*}
under some terminal state conditions, with $T>0$ large. Setting $s=t/T$ and $\varepsilon=1/T$, we rewrite the above optimal control problem as
\begin{equation*}\boxed{
\begin{split}
& \inf \int_0^1f^0(y(s),u(s))\,ds,\\
&  \text{subject to}\;\;\varepsilon \dot y(s) = f(y(s),u(s)),\quad s\in[0,1].\\[2mm]
\end{split}}
\end{equation*}
Then, we expect that, as $\varepsilon\rightarrow 0$, there is some convergence to the static problem
$$
\boxed{
\inf f^0(y,u), \;\;\text{subject to}\;\;f(y,u)=0. \\[2mm]
}$$
This intuition has been turned into rigorous results in the literature, under some appropriate assumptions.  These results say roughly that, if $T$ is large, then any optimal solution $y(\cdot)$ on $[0,T]$ spends most of its time close to an optimal solution $ y_s$ of the static problem. 
This is the (neighborhood) turnpike phenomenon. We call the point $y_s$ a turnpike point.

This turnpike phenomenon was first observed and investigated by economists for discrete-time optimal control problems 
(see, e.g., \cite{DorfmanSamuelsonSolow,Mc}). In the last three decades, many turnpike results have been established in a large number of works (see, e.g., \cite{Kokotovic, AL, CHJ, CarlsonBOOK,  Grune1, Faulwasser1, GTZ, LWei, Rapaport, Z1, Z2, Z3}  and references therein), either for discrete-time or continuous-time problems 
involving control systems in finite-dimensional state spaces, and very few of them in the infinite dimensional setting.

A more quantitative turnpike property, which is called the exponential turnpike property, has been established in \cite{PZ1,PZ2,TZ1} for both the linear and nonlinear continuous-time optimal controlled systems.
It means  that the optimal solution for the dynamic controlled problem remains exponentially close to an optimal solution for the corresponding static controlled problem  within a sufficiently large time interval contained in the long-time horizon under consideration. We stress that in those works not only the optimal state and control, but also the corresponding adjoint vector, resulting from the application of the Pontryagin maximum principle, were shown to remain exponentially close to an extremal triple for a corresponding static optimal control problem, except at the extremities of the time horizon. The main ingredient in the papers \cite{PZ1,PZ2,TZ1}
is an exponential dichotomy transformation and the hyperbolicity feature of the Hamiltonian system, deriving from the Pontryagin maximum principle, under some controllability and observability assumptions.

However, not all turnpike phenomena are around a single point. For instance,
the turnpike theorem for calculus of variations problems in \cite{Rapaport} is proved for the case when there are several turnpikes.  More precisely, they
show that 
there exists a competition between the several turnpikes for optimal trajectories with different initial states,
and provide in particular a criterion for the choice of turnpikes that are in competition.
On the another hand, for  some classes of optimal control problems for periodic systems, the turnpike phenomenon may occur around a periodic trajectory, which is itself characterized as being the optimal solution of an appropriate periodic optimal control problem (cf., e.g., \cite{Sa, TZZ, Z2, Z3, Zon}).
\medskip

In this paper, the first main result is to derive a more general turnpike result, valid for very general classes of optimal control problems settled in an infinite-dimensional state space, and where the turnpike phenomenon  is around a set $\mathcal{T}$. This generalizes the standard case where $\mathcal{T}$ is a singleton, and the less standard case where $\mathcal{T}$ is a periodic trajectory.  Between the case of one singleton and the periodic trajectory,  however, 
there are, to our knowledge, very few examples for intermediate situations  in the literature.

The organization of the paper is as follows. 
In Section \ref{general}, we build up an abstract framework to derive a general turnpike phenomenon around a set.  
In Section \ref{sec_turnpike}, we enlighten  the relationship between the above-mentioned abstract framework 
and the strict dissipativity property. Under the strict dissipativity assumption for optimal control problems, we establish the so-called measure-turnpike property. 
In Section \ref{sec_dissip}, we provide some material to clarify the relationship between measure-turnpike, strict dissipativity and strong duality.  Finally, Section \ref{consec} concludes the paper.
\section{An abstract setting}\label{general}

In this section, we are going to derive a general turnpike phenomenon around a set $\mathcal{T}$. The framework is the following.

Let $X$ (resp., $U$) be a reflexive Banach space endowed with the norm $\|\cdot\|_X$ (resp., $\|\cdot\|_U$).  
 Let $f: \mathbb R \times X\times U\rightarrow X $ be a continuous mapping that is uniformly  Lipschitz continuous in $(y,u)$ for all $t\in\mathbb R$. Let $f^0: \mathbb R\times X\times U\rightarrow \mathbb R$ be a continuous function that is bounded from below.
Let $E$ and $F$ be two subsets of $X$ and $U$, respectively. 
Given any $t_0\in\mathbb R$ and $t_1\in\mathbb R$ with $t_0<t_1$, we consider the non-autonomous  optimal control problem
$$\boxed{
(P_{[t_0,t_1]})\qquad \left\{\begin{array}{l}
J_{[t_0,t_1]} = \inf\frac{1}{t_1-t_0}  \int_{t_0}^{t_1} f^0(t,y(t),u(t))\, dt,\\[2mm]
\text{subject to}\;\;\;\;\dot y(t) = A(t)y+f(t,y(t),u(t)),\quad t\in[t_0,t_1],\\[2mm]
R(t_0,y(t_0),t_1,y(t_1))=0,\quad (y(t),u(t))\in E\times F, \quad t\in[t_0,t_1].\\[2mm]
\end{array}\right.}
$$
Here, 
$(A(t), D(A(t)))$ is a family of unbounded operators on $X$ such that the existence of the corresponding two-parameter evolution system $\Phi (t, s)$ is ensured (cf., e.g.,
\cite[Chapter 5, Definition 5.3]{Pa}), 
the controls are Lebesgue measurable functions $u(\cdot) : [t_0,t_1] \rightarrow F$, and $Y$ is a Banach space, the mapping $R: \mathbb R\times X\times\mathbb R\times X\rightarrow Y$ stands for any possible terminal state conditions.
Throughout the paper, the solutions $(y(\cdot),u(\cdot))\in C([t_0,t_1];X)\times L^2(t_0,t_1;U)$ are considered in the mild sense, meaning that
$$y(\tau)=\Phi(\tau,t_0)y(t_0)+\int_{t_0}^\tau \Phi(\tau,t) f(t,y(t),u(t))\,dt,\qquad\forall\tau\in[t_0,t_1].$$

\begin{remark}
\emph{
Typical examples of terminal conditions are the following:
\begin{itemize}
\item  When both initial and final conditions are let free in $(P_{[t_0,t_1]})$, take $R=0$.
\item  When the initial point is fixed (i.e., $y(t_0)=y_0$) and the final point is let free, take $R(s_0,z_0,s_1,z_1)=z_0-y_0$.
\item When both initial and final conditions are fixed (i.e., $y(t_0)=y_0$ and $y(t_1)=y_1$), take $R(s_0,z_0,s_1,z_1)=(z_0-y_0,z_1-y_1)$.
\item When the final point is expected to coincide with the initial point (i.e., $y(t_0)=y(t_1)$ without any other constraint), for instance in a  periodic optimal control problem, in which one assumes that there exists 
$T>0$ such that $f(t+T,y,u)=f(t,y,u)$ and $f^0(t+T,y,u)=f^0(t,y,u)$, $\forall (t,y,u)\in \mathbb R\times X\times U$,
take $R(s_0,z_0,s_1,z_1)=(s_1-s_0-T, z_0-z_1)$.
\end{itemize}
}
\end{remark}

Hereafter, we call $(y(t),u(t))$, $t\in[t_0,t_1]$, an admissible pair if it verifies the state equation and the constraint $(y(t),u(t))\in E\times F$ for almost every $t\in[t_0,t_1]$.  We remark that the definition of admissible pair does not require that the terminal state condition $R(t_0,y(t_0),t_1,y(t_1)) = 0$ is satisfied.
We denote by  $$C_{[t_0,t_1]}(y(\cdot),u(\cdot)) = \int_{t_0}^{t_1} f^0(t,y(t),u(t))\, dt$$ the cost of an admissible pair $(y(\cdot),u(\cdot))$ on $[t_0,t_1]$.
In other words, $J_{[t_0,t_1]}$ is the infimum with time average cost ({\it Ces\`aro mean}) over all admissible pairs satisfying the constraint on terminal points:
$$
J_{[t_0,t_1]} = \inf \left\{ \frac{1}{t_1-t_0}C_{[t_0,t_1]}(y(\cdot),u(\cdot)) \ \mid\ (y(\cdot),u(\cdot))\textrm{ admissible},\ R(t_0,y(t_0),t_1,y(t_1))=0 \right\}.
$$
Throughout the paper, we assume that the problem $(P_{[t_0,t_1]})$ has optimal solutions, and that an admissible pair $(y(\cdot),u(\cdot))$, with initial state $y(t_0)$,  is said to be optimal  for the problem $(P_{[t_0,t_1]})$ if $R(t_0,y(t_0),t_1,y(t_1))=0$ and $\frac{1}{t_1-t_0}C_{[t_0,t_1]}(y(\cdot),u(\cdot))=J_{[t_0,t_1]}$.
Existence of optimal solutions for optimal control problems  is well-known under appropriate convexity assumptions on $f^0$, $f$ and $R$ with $E$ and $F$ convex and closed (see, for instance, \cite[Chapter 3]{LiXunjing}). 

\medskip

We then consider the optimal control problem
$$\boxed{
(\bar P_{[t_0,t_1]})\qquad \left\{\begin{array}{l}
\bar J_{[t_0,t_1]} = \inf\frac{1}{t_1-t_0}C_{[t_0,t_1]}(y(\cdot),u(\cdot)),\\[2mm]
\text{subject to}\;\;\;\;\dot y(t) =A(t)y+ f(t,y(t),u(t)), \quad t\in[t_0,t_1],\\[2mm]
\quad (y(t),u(t))\in E\times F,\quad t\in[t_0,t_1].\\[2mm]
\end{array}\right.}
$$
Compared with the problem $(P_{[t_0,t_1]})$, in the above problem there is no terminal state constraint, i.e., $R(\cdot)=0$.
In fact, it is the infimum with time average cost  over all possible admissible pairs:
$$
\bar J_{[t_0,t_1]} = \inf \left\{ \frac{1}{t_1-t_0}C_{[t_0,t_1]}(y(\cdot),u(\cdot)) \ \mid\ (y(\cdot),u(\cdot))\textrm{ admissible} \right\}.
$$
We say the problem $(\bar P_{[t_0,t_1]})$ has a limit value if $\lim_{t_1\rightarrow+\infty}\bar J_{[t_0,t_1]} $
exists. 
We refer \cite{GR, QR} for the sufficient conditions ensuring the existence of the limit value. 
More precisely, asymptotic properties of optimal values, as $t_1$ tends to infinity, have been studied in \cite{QR} under suitable nonexpansivity  assumptions, and in \cite[Corollary 4 (iii)]{GR} by using occupational measures. 
In the sequel, we assume it exists and is written as 
$$
\bar J_{[t_0,+\infty)} = \lim_{t_1\rightarrow +\infty} \bar J_{[t_0,t_1]}.
$$

Besides, given any $y\in X$ we define the value function
$$
V_{[t_0,t_1]}(y) = \inf \left\{ \frac{1}{t_1-t_0}C_{[t_0,t_1]}(y(\cdot),u(\cdot)) \ \mid\ (y(\cdot),u(\cdot))\textrm{ admissible},\ y(t_0)=y \right\}.
$$
It is the optimal value of the optimal control problem with fixed initial data $y(t_0)=y$ (but free final point). Note that, if there exists no admissible trajectory starting at $y$ (because $E$ would not contain $y$), then we set $V_{[t_0,t_1]}(y)=+\infty$.
For each $y\in X$, we say a limit value exists if $\lim_{t_1\rightarrow+\infty} V_{[t_0,t_1]}(y) $
exists.   We now assume that, for each $y\in X$, the limit value exists  and is written as
$$
V_{[t_0,+\infty)}(y) = \lim_{t_1\rightarrow +\infty}  V_{[t_0,t_1]}(y).
$$

Clearly, we have
$$
\forall t_0<t_1,\qquad J_{[t_0,t_1]}\geq \bar J_{[t_0,t_1]},
$$
and thus
\begin{equation}\label{lower limit}
\liminf_{t_1\rightarrow +\infty} J_{[t_0,t_1]}\geq \bar J_{[t_0,+\infty)}.
\end{equation}
Meanwhile,
$$
\forall t_0<t_1,\quad\forall y\in X,\qquad   V_{[t_0,t_1]}(y)\geq \bar J_{[t_0,t_1]},
$$
and thus
$$
\forall y\in X,\qquad  V_{[t_0,+\infty)}(y)\geq  \bar J_{[t_0,+\infty)}.
$$

\medskip

 \begin{remark}\label{inv}
\emph{If the optimal control problem is autonomous (i.e., $A(\cdot)=A$, $f$ and $f^0$ are independent of time variable), it follows from the definitions that $\bar J_{[t_0,+\infty)}$, as well as
$V_{[t_0,+\infty)}(y)$, $\forall y\in X$, do not depend on $t_0\in  \mathbb R$.}
\end{remark}

\begin{remark}\label{re2}
\emph{
Actually we have 
$$\bar J_{[t_0,t_1]} = \inf_{y\in X} V_{[t_0,t_1]}(y).$$
This is obvious because we can split the infimum and write
$$
\bar J_{[t_0,t_1]} = \inf_{y\in X} \inf_{\stackrel{(y(\cdot),u(\cdot))\textrm{ admissible}}{y(t_0)=y}}  \frac{1}{t_1-t_0}C_{[t_0,t_1]}(y(\cdot),u(\cdot)) = \inf_{y\in X} V_{[t_0,t_1]}(y).
$$}
\end{remark}

\medskip
In order to state the general turnpike result, we make the following assumptions:

\begin{itemize}
\item[$ (H_1)$.] (Turnpike set) There exists a closed set $\mathcal{T}\subset X$ (called turnpike set) such that
$$
\qquad\forall t_0\in\mathbb R,\quad\forall y\in \mathcal{T},\qquad V_{[t_0,+\infty)}(y)=\bar J_{[t_0,+\infty)}.
$$
\end{itemize}

\begin{itemize}

\item[$ (H_2)$.] (Viability) The turnpike set  $\mathcal{T}$ is \emph{viable}, meaning that, for every $y\in\mathcal{T}$ and for every $t_0\in\mathbb R$, there exists an admissible pair $(y(\cdot),u(\cdot))$ such that $y(t_0)=y$ and $y(t)\in\mathcal{T}$ for every $t\geq t_0$.
Moreover, every admissible trajectory remaining in $\mathcal{T}$ is optimal in the following sense: for every $y\in\mathcal{T}$, for every $t_0\in\mathbb R$, for every admissible pair $(y(\cdot),u(\cdot))$ such that $y(t_0)=y$ and $y(t)\in\mathcal{T}$ for every $t\geq t_0$, we have
$$
V_{[t_0,+\infty)}(y) = \lim_{t\rightarrow+\infty} \frac{1}{t-t_0} C_{[t_0,t]}(y(\cdot),u(\cdot)) .
$$
\end{itemize}

\begin{itemize}

\item[$(H_3)$.] (Controllability) There exist  $\bar \delta_0>0$ and $\bar \delta_1>0$ such that, for every $t_0\in\mathbb R$
and every $t_1\in\mathbb R$ with $t_1>t_0+\bar \delta_0+\bar \delta_1$, and every optimal trajectory
$y(\cdot)$ for the problem $(P_{[t_0,t_1]})$,
\begin{itemize}
\item there exist $\delta_0\in(0,\bar \delta_0]$ and an admissible pair $(y_0(\cdot),u_0(\cdot))$ on $[t_0,t_0+\delta_0]$ such that $y_0(t_0)=y(t_0)$ and $y_0(t_0+\delta_0)\in \mathcal{T}$,

\item for every $y\in\mathcal{T}$, there exist $\delta_1\in(0,\bar \delta_1]$ and  an admissible pair $(y_1(\cdot),u_1(\cdot))$ on $[t_1-\delta_1,t_1]$ such that $y_1(t_1-\delta_1)=y$ and $y_1(t_1)=y(t_1)$.
\end{itemize}

\item[$(H_4)$.] (Coercivity) 
There exist a monotone increasing continuous function $\beta:[0,+\infty)\rightarrow[0,+\infty)$ with $\beta(0)=0$  and a distance 
 $\text{dist} (\cdot,\mathcal T)$  to $\mathcal T$
such that
for every $t_0$ and every $\hat y\in X$,
$$
V_{[t_0,t_1]}(\hat y) \geq \inf_{y\in X}V_{[t_0,t_1]}(y) + \frac{1}{t_1-t_0}\int_{t_0}^{t_1} 
\beta(\text{dist}(\hat y(t),\mathcal{T}))\,dt +\mathrm o(1),$$
holds for any optimal trajectory $\hat y(\cdot)$ starting at $\hat y(t_0)=\hat y$ for the problem $(P_{[t_0,t_1]})$,
where the last term in the above inequality  is an infinitesimal quantity as  $t_1\rightarrow +\infty$.
\end{itemize}
\medskip

Hereafter, we speak of 
\textbf{Assumption (H)} in order to designate assumptions $(H_1)$, $(H_2)$, $(H_3)$ and $(H_4)$.

\medskip

\begin{remark}
\begin{itemize}
\item[(i).] \emph{Under $(H_1)$, we actually have 
$\bar J_{[t_0,+\infty)} = \inf_{y\in X} V_{[t_0,+\infty)}(y)$, $\forall t_0\in \mathbb R$.
}

\item[(ii).] \emph{$(H_2)$ means  that, starting at $y\in\mathcal{T}$, it is better to remain in $\mathcal{T}$ than to leave this set.}

\item[(iii).] \emph{$(H_3)$ is a specific controllability assumption.
For instance, in the case that the initial point $y(t_0)=y_0$ and the final point $y(t_1)=y_1$ in the problem $(P_{[t_0,t_1]})$ are fixed, then $(H_3)$ means that the turnpike set $\mathcal{T}$ is reachable from $y_0$ within time $\bar\delta_0$,  and that $y_1$ is reachable from any point of $\mathcal{T}$ within time $\bar \delta_1$. When the turnpike set $\mathcal T$
is a single point, we refer the reader to \cite{Faulwasser1} for a similar assumption. }

\item[(iv).] \emph{$(H_4)$ is a coercivity  assumption  involving the value function and the turnpike set $\mathcal T$.
It may not be easy to verify this condition. However, under the strict dissipativity property (which will be introduced in the next section),  it is satisfied. We refer the reader to Section \ref{rd} for more discussions about the relationship with the strict dissipativity. 
}
\end{itemize}
\end{remark}
\medskip
We first give a simple example   which satisfies the \textbf{Assumption (H)}.
\begin{example}
Let $\Omega\subset\mathbb R^n$,  $n\geq1$,  be a bounded domain with a smooth boundary $\partial \Omega$, and let $\mathcal D\subset\Omega$ be a non-empty 
 open subset. We denote by $\chi_\mathcal D$ the characteristic function of $\mathcal D$. 
 Let $M>0$ and $y_0\in L^2(\Omega)$ be arbitrarily given.
For  $t_0< t_1$, consider the following optimal control problem for the heat equation: 
$$\;\;\;\;\;\;\;\; \inf\, \frac{1}{t_1-t_0}\int_{t_0}^{t_1}\Big(\|y(\cdot,t)\|_{L^2(\Omega)}^2+\|u(\cdot,t)\|_{L^2(\mathcal D)}^2\Big)\,dt$$
subject to
\begin{equation*}\left\{
\begin{split}
&y_t-\Delta y=\chi_{\mathcal D} u,\;\;\text{in}\;\;\Omega\times(t_0,t_1),\\
&y=0,\;\;\text{on}\;\;\partial\Omega\times(t_0,t_1),\\
&y(\cdot,t_0)=y_0,\;\;y(\cdot,t_1)=0,\;\;\text{in}\;\;\Omega,\\
&\|u(\cdot,t)\|_{L^2(\mathcal D)}\leq M,\;\;\text{for a.e.}\;\;t\in(t_0,t_1).
\end{split}\right.
\end{equation*}
Here, we take $X=L^2(\Omega)$, $U=L^2(\mathcal D)$ and $F=\{u\in U\,\,|\,\, \|u\|_{L^2(\mathcal D)}\leq M\}$. By the standard energy estimate, we can take $E=\{y\in X\,\,|\,\, \|y\|_{L^2(\Omega)}\leq \|y_0\|_{L^2(\Omega)}+M/\lambda_1\}$,
where $\lambda_1>0$ is the first eigenvalue of the Laplace operator with zero Dirichlet boundary condition on $\partial \Omega$.

It is clear that $\bar J_{[t_0,+\infty)}=0$. Let us define the turnpike set $\mathcal T=\{0\}$.   By the $L^\infty$-null controllability and exponential decay of the energy of heat equations, then the above control system with bounded controls is null controllable from each given point $y_0$ within a large time interval (see, e.g., \cite{wang}). Therefore, the assumptions $(H_1)$, $(H_2)$ and $(H_3)$ are satisfied.
Let $y(\cdot)$ be any optimal trajectory starting at $y(\cdot, t_0)=y_0$. 
By the definition of value function, we see that 
$$
V_{[t_0,t_1]}(y_0)\geq \frac{1}{t_1-t_0}\int_{t_0}^{t_1} \|y(\cdot,t)\|^2_{L^2(\Omega)}\,dt.
$$
Hence $(H_4)$ is satisfied with $\beta (r)=r^2$, $r\geq 0$.

\end{example}

\medskip

The main result of this paper is  the following. It says that a general turnpike behavior occurs around the turnpike set $\mathcal T$,
in terms of the time average of the distance from optimal trajectories to $\mathcal T$.

\begin{theorem}\label{thm1}
Assume that $f^0$ is bounded on $\mathbb R\times E\times F$.
\begin{enumerate}
\item[(i).] Under $(H_1)$,  $(H_2)$ and $(H_3)$, for every $t_0\in\mathbb R$ we have
\begin{equation}\label{mi1}
\lim_{t_1\rightarrow +\infty} J_{[t_0,t_1]} = \bar J_{[t_0,+\infty)}.
\end{equation}
\item[(ii).] Further, under the additional assumption  $(H_4)$
 we have 
\begin{equation}\label{guai1}
\lim_{t_1\rightarrow+\infty}\frac{1}{t_1-t_0}\int_{t_0}^{t_1}\beta (\text{dist} (y(t),\mathcal T))\,dt =0,
\end{equation}
for any $t_0$ and any optimal trajectory $y(\cdot)$ of the problem  $(P_{[t_0,t_1]})$.
\end{enumerate}
\end{theorem}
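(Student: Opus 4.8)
The lower bound in part (i) is free: inequality \eqref{lower limit} already gives $\liminf_{t_1\to+\infty}J_{[t_0,t_1]}\ge\bar J_{[t_0,+\infty)}$, so the entire content of \eqref{mi1} is the matching upper bound $\limsup_{t_1\to+\infty}J_{[t_0,t_1]}\le\bar J_{[t_0,+\infty)}$. The plan is to exhibit, for each large $t_1$, an admissible competitor that satisfies the terminal constraint $R=0$ and whose Cesàro mean converges to $\bar J_{[t_0,+\infty)}$. Fix $t_1>t_0+\bar\delta_0+\bar\delta_1$ and let $(y^\ast,u^\ast)$ be optimal for $(P_{[t_0,t_1]})$. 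I would glue three pieces: on $[t_0,t_0+\delta_0]$, the arc from the first bullet of $(H_3)$ steering $y^\ast(t_0)$ into some $z_0\in\mathcal T$; on $[t_0+\delta_0,t_1-\delta_1]$, a trajectory furnished by the viability assumption $(H_2)$ that starts at $z_0$ and remains in $\mathcal T$, ending at some $z_1\in\mathcal T$; and on $[t_1-\delta_1,t_1]$, the arc from the second bullet of $(H_3)$ steering $z_1$ onto $y^\ast(t_1)$. This competitor is admissible and shares its endpoints with $y^\ast$, hence satisfies $R(t_0,\cdot,t_1,\cdot)=0$ and is eligible for $(P_{[t_0,t_1]})$, yielding $J_{[t_0,t_1]}\le\frac{1}{t_1-t_0}C_{[t_0,t_1]}(\tilde y,\tilde u)$.

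It then remains to estimate this average. Since $f^0$ is bounded by some $M$ on $\mathbb R\times E\times F$, the two short arcs have total duration at most $\bar\delta_0+\bar\delta_1$ and cost at most $M(\bar\delta_0+\bar\delta_1)$, so after division by $t_1-t_0$ they contribute $\mathrm{o}(1)$. For the middle piece, $(H_1)$ and $(H_2)$ together ensure that a trajectory remaining in $\mathcal T$ is infinite-horizon optimal from $z_0$, so its mean tends to $V_{[t_0+\delta_0,+\infty)}(z_0)=\bar J_{[t_0+\delta_0,+\infty)}$; multiplying by the ratio $\frac{(t_1-\delta_1)-(t_0+\delta_0)}{t_1-t_0}\to1$ leaves $\bar J_{[t_0+\delta_0,+\infty)}$ in the limit. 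Finally, restricting any admissible pair on $[t_0,t_1]$ to $[t_0+\delta_0,t_1]$ and using the bound $M$ once more gives $\bar J_{[t_0+\delta_0,+\infty)}\le\bar J_{[t_0,+\infty)}$ after $t_1\to+\infty$. Combining the three estimates proves \eqref{mi1}.

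For part (ii), let $y(\cdot)$ be an optimal trajectory of $(P_{[t_0,t_1]})$ and set $\hat y=y(t_0)$. Applying the coercivity inequality $(H_4)$ to this trajectory and using $\inf_{z\in X}V_{[t_0,t_1]}(z)=\bar J_{[t_0,t_1]}$ from Remark \ref{re2}, I obtain
\begin{equation*}
\frac{1}{t_1-t_0}\int_{t_0}^{t_1}\beta(\text{dist}(y(t),\mathcal T))\,dt\le V_{[t_0,t_1]}(\hat y)-\bar J_{[t_0,t_1]}+\mathrm{o}(1).
\end{equation*}
Because $y(\cdot)$ is itself admissible with initial datum $\hat y$, it competes in the definition of $V_{[t_0,t_1]}(\hat y)$, whence $V_{[t_0,t_1]}(\hat y)\le\frac{1}{t_1-t_0}C_{[t_0,t_1]}(y,u)=J_{[t_0,t_1]}$, and the right-hand side is bounded by $J_{[t_0,t_1]}-\bar J_{[t_0,t_1]}+\mathrm{o}(1)$. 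By part (i) and the assumed existence of the limit value, both $J_{[t_0,t_1]}$ and $\bar J_{[t_0,t_1]}$ tend to $\bar J_{[t_0,+\infty)}$, so this difference vanishes. Since $\beta\ge0$, the nonnegative left-hand side is squeezed to $0$, which is exactly \eqref{guai1}.

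The hard part will be the upper bound of part (i), and specifically the control of the middle piece. The connecting arcs are attached to the (a priori $t_1$-dependent) endpoints of $y^\ast$, so $z_0$, the gluing time $t_0+\delta_0$, and the hand-off point $z_1$ may all vary with $t_1$, whereas the convergence of the Cesàro mean supplied by $(H_2)$ is only pointwise in a fixed trajectory. Upgrading this to a limit that is uniform as $z_0$ and the shift $\delta_0$ range over their admissible sets, and synchronizing the terminal arc so that its length $\delta_1$ matches the point $z_1$ reached along the viable trajectory, are the delicate points; it is precisely the boundedness of $f^0$ together with the uniform bounds $\bar\delta_0,\bar\delta_1$ that makes these transient effects negligible in the time average.
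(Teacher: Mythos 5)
Your proposal is correct and follows essentially the same route as the paper's own proof: the same three-piece gluing via $(H_2)$--$(H_3)$, the same use of boundedness of $f^0$ to kill the connecting arcs and to establish $\bar J_{[t_0+\delta_0,+\infty)}\le\bar J_{[t_0,+\infty)}$ by restriction, and the same squeeze via $(H_4)$, Remark~\ref{re2} and part (i) for \eqref{guai1}. The uniformity issue you flag in the last paragraph (the middle trajectory and its endpoints depend on $t_1$, while $(H_2)$ only gives a pointwise-in-trajectory limit) is real, and the paper's proof passes over it in exactly the same way.
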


\medskip

\begin{remark}\label{remove}
{\it
The boundedness assumption on $f^0$ in Theorem \ref{thm1} can be removed in case the optimal control problem is autonomous, i.e., when $A$, $f$ and $f^0$ do not depend on $t$, provided that the controllability assumption $(H_3)$ be slightly reinforced, by assuming ``controllability with finite cost": one can steer $y(t_0)$ to the turnpike set $\mathcal T$ within time 
$\delta_0$ and steer any point of $\mathcal T$ to $y(t_1)$ within time $\delta_1$ with a cost that is uniformly bounded with respect to every optimal trajectory $y(\cdot)$ and $y\in \mathcal T$.  For non-autonomous control problems, see also Remark~\ref{dubao1}.
}
\end{remark}


The property \eqref{guai1} is a weak turnpike property, which can be called the $\beta$-integral-turnpike property, and which is even weaker than the measure-turnpike property introduced further in Section \ref{dis}. Indeed, from $\eqref{guai1}$ we infer that for any $\delta>0$, there exists $T_0>t_0$ such that 
\begin{equation*}
\frac{1}{t_1-t_0}\int_{t_0}^{t_1}\beta (\text{dist} (y(t),\mathcal T))\,dt\leq \delta
\end{equation*}
for any $t_1\geq T_0$.
If, for any $\varepsilon>0$,  we set 
\begin{equation*}
Q^\varepsilon_{[t_0,t_1]}=\big\{t\in [t_0,t_1]\ \mid\
 \text{dist} (y(t),\mathcal T)>\varepsilon\big\},\;\;\;\;\forall t_1\geq T_0.
\end{equation*}
Throughout the paper, we denote by $|Q|$ the Lebesgue measure of a subset $Q\subset\mathbb R$.
Then, by Markov's inequality, one can easily derive that
$$
\frac{\left\vert Q^\varepsilon_{[t_0,t_1]}\right\vert}{t_1-t_0}\leq \frac{\delta}{\beta(\varepsilon)},\;\;\;\;\forall t_1\geq T_0.
$$
This is weaker than the property \eqref{bairui1} in Section \ref{dis}.

\begin{proof}[\textbf{Proof of Theorem~\ref{thm1}}]
$(i)$. Let $t_1>t_0+\bar\delta_0+\bar\delta_1$,  with $\bar\delta_0$ and $\bar\delta_1$ as in $(H_3)$.
Let $(y(\cdot),u(\cdot))$ be an optimal pair for the problem $(P_{[t_0,t_1]})$.
By $(H_2)$ and $(H_3)$, there exist $\delta_0\in(0,\bar\delta_0]$, $\delta_1\in(0,\bar\delta_1]$ and an admissible pair $(\widetilde y(\cdot),\widetilde u(\cdot))$ such that
\begin{itemize}
\item $\widetilde y(\cdot)$ steers the control system from $y(t_0)$ to $\mathcal{T}$ within the time interval $[t_0,t_0+\delta_0]$,
\item  $\widetilde y(\cdot)$ remains in  $\mathcal{T}$ within the time interval $[t_0+\delta_0,t_1-\delta_1]$,
\item  $\widetilde y(\cdot)$ steers the control system from $\widetilde y(t_1-\delta_1)\in\mathcal{T}$ to $y(t_1)$ within the time interval  $[t_1-\delta_1,t_1]$.
\end{itemize}
These trajectories are drawn in Figure \ref{fig1}.

\begin{figure}[h]
\centering 
\includegraphics[width=10cm]{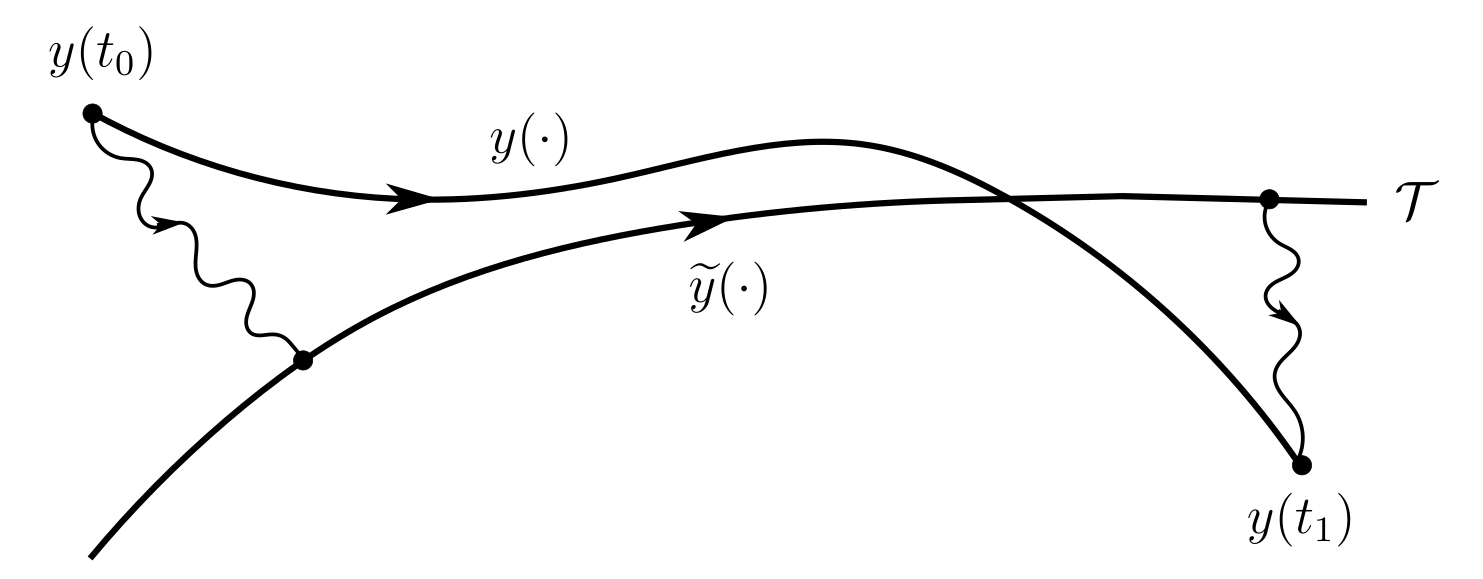}
\caption{Optimal trajectory $y(\cdot)$, and admissible trajectory $\widetilde y(\cdot)$ remaining along the turnpike set $\mathcal{T}$ as long as possible.} 
\label{fig1}
\end{figure}

Its cost of time average within the time interval $[t_0,t_1]$ is
\begin{multline}\label{sum}
\frac{1}{t_1-t_0} C_{[t_0,t_1]}(\widetilde y(\cdot),\widetilde u(\cdot))
= \frac{1}{t_1-t_0} C_{[t_0,t_0+\delta_0]}(\widetilde y(\cdot),\widetilde u(\cdot)) + \frac{1}{t_1-t_0} C_{[t_1-\delta_1,t_1]}(\widetilde y(\cdot),\widetilde u(\cdot)) \\
+ \frac{1}{t_1-t_0} C_{[t_0+\delta_0,t_1-\delta_1]}(\widetilde y(\cdot),\widetilde u(\cdot)).
\end{multline}
Since $f^0$ is bounded on $\mathbb R\times E\times F$, the first two  terms on the right hand side of  \eqref{sum} converge to zero as $t_1\rightarrow +\infty$.
Since $\widetilde y(t_0+\delta_0)\in\mathcal{T}$, by $(H_2)$ we have
\begin{equation}\label{tj3}
V_{[t_0+\delta_0,+\infty)}(\widetilde y(t_0+\delta_0))=
\lim_{t_1\rightarrow+\infty}\frac{1}{t_1-\delta_1-(t_0+\delta_0)}C_{[t_0+\delta_0,t_1-\delta_1]}(\widetilde y(\cdot),\widetilde u(\cdot) ).
\end{equation}
As $\widetilde y(t_0+\delta_0)\in\mathcal{T}$, by $(H_1)$ we infer
\begin{equation}\label{tj2}
V_{[t_0+\delta_0,+\infty)}(\widetilde y(t_0+\delta_0))=\bar J_{[t_0+\delta_0,+\infty)}.
\end{equation}
We now claim that 
\begin{equation}\label{tianjing1}
\bar J_{[t_0+\delta_0,+\infty)}\leq \bar J_{[t_0,+\infty)}.
\end{equation}
We postpone the proof of this claim and first see how it could be used in showing the convergence \eqref{mi1}.
Therefore, we derive from \eqref{tj2} and \eqref{tianjing1}  that 
\begin{equation*}
V_{[t_0+\delta_0,+\infty)}(\widetilde y(t_0+\delta_0))\leq \bar J_{[t_0,+\infty)}.
\end{equation*}
This, together with \eqref{sum} and \eqref{tj3}, indicate that 
\begin{equation}\label{hui1}
\lim_{t_1\rightarrow+\infty}\frac{1}{t_1-t_0} C_{[t_0,t_1]}(\widetilde y(\cdot),\widetilde u(\cdot))\leq \bar J_{[t_0,+\infty)}.
\end{equation}

On the other hand, by the construction above, $(\widetilde y(\cdot),\widetilde u(\cdot))$ is an admissible pair satisfying the terminal state constraint $R(t_0,\widetilde y(t_0),t_1,\widetilde y(t_1))=0$, we have
 $$
 J_{[t_0,t_1]} \leq \frac{1}{t_1-t_0} C_{[t_0,t_1]}(\widetilde y(\cdot),\widetilde u(\cdot)).
 $$
This, combined with \eqref{hui1}, infers that
   $$\limsup_{t_1\rightarrow+\infty} J_{[t_0,t_1]} \leq \bar J_{[t_0,+\infty)}.$$
Which, along with  \eqref{lower limit}, leads to \eqref{mi1}.

Next, we present the proof of the claim \eqref{tianjing1}. Let $(\bar y(\cdot),\bar u(\cdot))$ be an optimal pair 
for the problem $(\bar P_{[t_0,t_1]})$. Then
\begin{multline*}
\bar J_{[t_0,t_1]}-\bar J_{[t_0+\delta_0,t_1]}
=
\frac{1}{t_1-t_0}\int_{t_0}^{t_0+\delta_0}f^0(t,\bar y(t),\bar u(t))\,dt+\frac{1}{t_1-t_0}
\int_{t_0+\delta_0}^{t_1}f^0(t,\bar y(t),\bar u(t))\,dt-\bar J_{[t_0+\delta_0,t_1]}\\
=\frac{1}{t_1-t_0}\int_{t_0}^{t_0+\delta_0}f^0(t,\bar y(t),\bar u(t))\,dt+\Big(\frac{t_1-t_0-\delta_0}{t_1-t_0}-1\Big)\times
\frac{1}{t_1-t_0-\delta_0}\int_{t_0+\delta_0}^{t_1}f^0(t,\bar y(t),\bar u(t))dt\\
+
\frac{1}{t_1-t_0-\delta_0}\int_{t_0+\delta_0}^{t_1}f^0(t,\bar y(t),\bar u(t))dt-\bar J_{[t_0+\delta_0,t_1]}.
\end{multline*}
Since $(\bar y(\cdot),\bar u(\cdot))$ is also admissible for the problem $(\bar P_{[t_0+\delta_0,t_1]})$,
\begin{equation*}
\frac{1}{t_1-t_0-\delta_0}\int_{t_0+\delta_0}^{t_1}f^0(t,\bar y(t),\bar u(t))dt\geq\bar J_{[t_0+\delta_0,t_1]},
\end{equation*}
we see that 
\begin{multline*}
\bar J_{[t_0,t_1]}-\bar J_{[t_0+\delta_0,t_1]}\\ \geq
\frac{1}{t_1-t_0}\int_{t_0}^{t_0+\delta_0}f^0(t,\bar y(t),\bar u(t))\,dt+\Big(\frac{t_1-t_0-\delta_0}{t_1-t_0}-1\Big)\times
\frac{1}{t_1-t_0-\delta_0}\int_{t_0+\delta_0}^{t_1}f^0(t,\bar y(t),\bar u(t))dt.\\
\end{multline*}
By the boundedness of $f^0$ on $\mathbb R\times E\times F$ (i.e., there exists $M>0$ such that $|f^0(\cdot)|\leq M$), we obtain
\begin{equation*}
\bar J_{[t_0,t_1]}-\bar J_{[t_0+\delta_0,t_1]} \geq -\frac{2M\delta_0}{t_1-t_0},
\end{equation*}
which implies \eqref{tianjing1} as $t_1\rightarrow +\infty$.

\medskip
$(ii)$. By the definition of the value function $V_{[t_0,t_1]}(\cdot)$, we obtain
\begin{equation}\label{jican1}
 V_{[t_0,t_1]}(y(0))\leq\frac{1}{t_1-t_0} C_{[t_0,t_1]}(y(\cdot),u(\cdot)).
\end{equation}
By $(H_4)$ and Remark~\ref{re2},  we have
\begin{equation}\label{jican2}
V_{[t_0,t_1]}(y(0))
\geq \bar J_{[t_0,t_1]}+\frac{1}{t_1-t_0}\int_{t_0}^{t_1} 
\beta(\text{dist}(y(t),\mathcal{T}))\,dt +\mathrm o(1),
\end{equation}
as $t_1\rightarrow +\infty$.
By \eqref{mi1} we infer  
$$
\lim_{t_1\rightarrow+\infty}\frac{1}{t_1-t_0} C_{[t_0,t_1]}(y(\cdot),u(\cdot))=\lim_{t_1\rightarrow+\infty}J_{[t_0,t_1]}=\bar J_{[t_0,+\infty)}.$$
This, together with \eqref{jican1} and \eqref{jican2}, indicates 
$$
\limsup_{t_1\rightarrow+\infty}\frac{1}{t_1-t_0}\int_{t_0}^{t_1} 
\beta(\text{dist}(y(t),\mathcal{T}))\,dt =0,
$$
which completes the proof.
\end{proof}

\begin{remark}\label{nonc}
{\it 
In the proof of Theorem \ref{thm1}, the role of controllability assumption $(H_3)$ is to ensure that there is  an admissible trajectory $\widetilde y(\cdot)$ satisfying the terminal state condition $R(t_0,\widetilde y(t_0), t_1,\widetilde y(t_1))=0$ and with a comparable cost (i.e., \eqref{hui1}).  

Note that $(H_3)$ can be weakened to some cases where controllability may fail:
take any control system that is asymptotically controllable to the turnpike set $\mathcal{T}$. This is the case for the heat equation which is asymptotically controllable for any given point (cf., e.g., \cite[Chapter 7]{LiXunjing}). Then, if one waits for a certain time, one will arrive at some neighborhood of $\mathcal{T}$.
Similarly, to run the proof as in Theorem \ref{thm1},  one needs an assumption which is stronger than $(H_2)$. More precisely,  one needs viability, not only along $\mathcal{T}$, but also in a neighborhood of $\mathcal{T}$.
Under these assumptions, we believe that one can design a turnpike result for this control system with free final point. 

In any case, note that, when the final point is free, having a turnpike property is more or less equivalent to having an asymptotic stabilization to $ \mathcal{T}$ (see also an analogous discussion in \cite[Remark 2]{Faulwasser1}).
    If additionally one wants to fix the final point, then one would need the existence of a trajectory steering any point of the neighborhood of $\mathcal{T}$ to the final point.}
\end{remark}

\begin{remark}\label{dubao1}
{\it
As seen in the proof of Theorem \ref{thm1}, the assumption of boundedness of $f^0$ is used two times: the first one, in order to bound the first two terms of  \eqref{sum};  the second one, in order to prove \eqref{tianjing1}. For autonomous optimal control problems, on the one part we have  $\bar J_{[t_0+\delta_0,+\infty)}= \bar J_{[t_0,+\infty)}$ (see Remark \ref{inv}) and then \eqref{tianjing1} is true, and on the other part the first two terms at the right-hand side of \eqref{sum} converge to zero as
$t_1\rightarrow +\infty$ under the ``controllability with finite cost" assumption mentioned in Remark \ref{remove}.  In contrast, for non-autonomous optimal control problems the situation may be more complicated, in particular due to the dependence on time of $f^0$. The assumption of boundedness of $f^0$ is quite strong and could of course be weakened in a number of ways so as to ensure that the above proof still works. We prefer keeping this rather strong assumption in order to put light in the main line of the argument, not going into too technical details. Variants are easy to derive according to the context.
}
\end{remark}
 

\section{Relationship with (strict) dissipativity}\label{sec_turnpike}\label{rd}

In this section, we make precisely  the relationship between the strict dissipativity property (which we recall in Section \ref{dd})
and the so-called measure-turnpike property (which we define in Section \ref{dis}). 

\subsection{What is (strict) dissipativity}\label{dd}
To fix ideas, in this section we only consider the autonomous case.
Let $X$, $U$, $E$ and $F$ be the same as in Section \ref{general}. Let $A(\cdot)\equiv A$
generate a $C_0$ semigroup $\{e^{tA}: t\geq 0\}$ on $X$, and let $f$ and $f^0$ be time-independent.
To simplify the notation,
for every $T>0$, we here consider the optimal control problem
\begin{equation*}
\boxed{
(\bar P_{[0,T]})\qquad \left\{\begin{split}
& \inf J^T(y(\cdot),u(\cdot))=\frac{1}{T}\int_0^Tf^0(y(t),u(t))\,dt,\\
&\text{subject to}\;\;\; \dot{y}(t)=Ay(t)+f(y(t),u(t)), \quad t\in[0,T], \\[1mm]
& y(t)\in E, \quad  u(t)\in F,\qquad t\in[0,T]. \\[1mm]
\end{split}\right.}
\end{equation*}
Indeed, the above problem $(\bar P_{[0,T]})$ coincides with  $(\bar P_{[t_0,t_1]})$ in Section~\ref{general} for $t_0=0$ and $t_1=T$. 
Note that the terminal states $y(0)$ and $y(T)$ are left free in the problem $(\bar P_{[0,T]})$. 
Recall that the solutions $(y(\cdot),u(\cdot))\in C([0,T];X)\times L^2(0,T;U)$ are considered in the mild sense, meaning that
$$y(\tau)=e^{\tau A}y(0)+\int_{0}^\tau e^{(\tau-t)A} f(y(t),u(t))\,dt,\qquad\forall\tau\in[0,T],$$
or equivalently,
\begin{equation*}\label{5311}
\begin{split}
\langle \varphi, y(\tau)\rangle_{X^*,X}-\langle \varphi, y(0)\rangle_{X^*,X}=\int_{0}^\tau \Big( \langle A^*\varphi, y(t)\rangle_{X^*,X}+\langle  \varphi,f(y(t),u(t))\rangle_{X^*,X}\Big) dt ,
\end{split}
\end{equation*}
for each $\tau\in[0,T]$ and $\varphi\in D(A^*)$, where $A^*:D(A^*)\subset X^*\rightarrow X^*$ is the adjoint operator of $A$, and  $\langle\cdot,\cdot\rangle_{X^*,X}$ is the dual paring 
between $X$ and its dual space $X^*$.

Likewise,
we say $(y(\cdot), u(\cdot))$ an admissible pair to the problem $(\bar P_{[0,T]})$ if it satisfies the state
equation and the above state-control constraint. 
Assume that, for any $T>0$,  $(\bar P_{[0,T]})$ has at least one optimal solution denoted by $(y^T(\cdot),u^T(\cdot))$, and we set
$$\bar J^T=J^T(y^T(\cdot),u^T(\cdot)).$$
Note that $\bar J^T$ does not depend on the optimal solution under consideration.

In the finite-dimensional case where $X = \mathbb R^n$ and $U=\mathbb R^m$, without loss of generality, we may take $A = 0$, and then the control system is $\dot{y}(t)=f(y(t),u(t))$.
We refer the reader to \cite{Faulwasser1,TZ1} for the asymptotic behavior of optimal solutions of such optimal control problems with constraints on the terminal states.

Consider the static optimal control problem
\begin{equation*}\boxed{
(P_s)\qquad \left\{\begin{split}
& \inf J_s(y,u)= f^0(y,u), \\
& \text{subject to}\;\;\; Ay+f(y,u)=0, \\
& y\in E, \quad  u\in F, \\
\end{split}\right.}
\end{equation*}
where the first equation means that
$$
\langle A^*\varphi, y\rangle_{X^*,X}+\langle \varphi,f(y,u)\rangle_{X^*,X}=0,\qquad\forall\varphi\in D(A^*). 
$$
As above, we assume that there exists at least one optimal solution $(y_s,u_s)$ of $(P_s)$. Such existence results are as well standard, for instance in the case where $A$ is an elliptic differential operator (see \cite[Chapter 3, Theorem 6.4]{LiXunjing}).
We set 
$$\bar J_s=J_s(y_s,u_s).$$
Note that $\bar J_s$ does not depend on the optimal solution that is considered.
Of course, uniqueness of the minimizer cannot be ensured in general because the problem is not assumed to be convex.
Note that $(y_s,u_s)$ is admissible for the problem $(\bar P_{[0,T]})$ for any $T>0$, meaning that it satisfies the constraints and is a solution of the control system.

We next define the notion of dissipativity for the infinite-dimensional controlled system, which is originally due to \cite{Willems} for finite-dimensional dynamics (see also related definitions in  \cite{Faulwasser1}). Recall that the continuous function $\alpha: [0,+\infty)\rightarrow [0,+\infty)$ with $\alpha(0)=0$ is said to be a $\mathcal K$-class function if it is monotone increasing.

\begin{definition}
We say that $\{(\bar P_{[0,T]})\, \mid\, T>0\}$ is \emph{dissipative} at an optimal stationary point $(y_s, u_s)$ with respect to the \emph{supply rate function}
\begin{equation}\label{5234}
\omega(y,u)= f^0(y,u)-f^0(y_s,  u_s),\qquad \forall(y,u)\in E\times F,
\end{equation}
if there exists a \emph{storage function} $S:E\rightarrow\mathbb R$, locally bounded and bounded from below, such that, for any $T>0$, the dissipation inequality
\begin{equation}\label{5224}
S(y(0))+\int_0^\tau\omega(y(t),u(t))\,dt\geq S(y(\tau)),\;\;\forall \tau\in[0,T],
\end{equation}
holds true,  for any admissible pair $(y(\cdot),u(\cdot))$. 

We say it is \emph{strictly dissipative} at  $(y_s,  u_s)$
with respect to the supply rate function $\omega$ if there exists a $\mathcal{K}$-class function $\alpha(\cdot)$ such that, for any $T>0$, the strict dissipation inequality
\begin{equation}\label{5225}
S(y(0))+\int_0^\tau\omega(y(t),u(t))\,dt\geq S(y(\tau))+\int^\tau_0\alpha\big(\|(y(t)- y_s, u(t)- u_s)\|_{X\times U}\big)\,dt,
\;\;\forall \tau\in[0,T],
\end{equation}
holds true, for any admissible pair $(y(\cdot),u(\cdot))$.  
The function $d(\cdot)= \alpha(\|(y(\cdot)- y_s, u(\cdot)- u_s)\|_{X\times U}) $ in \eqref{5225} is  called the dissipation rate.
\end{definition}

Although there are many possibly different notions of dissipativity introduced in the literature (such as the positivity or the local boundedness of the storage function in their definitions, cf., e.g., \cite[Chapter 4]{disbook}), they are proved to be equivalent in principle between with each other.
Note that a storage function is defined up to an additive constant.
We here define the storage function $S : E \rightarrow \mathbb R$ to take real values instead of positive real values. Since $S$ is assumed to be bounded from below, one could as well consider $S : E \rightarrow  \mathbb [0,+\infty)$.
We mention that no regularity is a priori required to define it. 
Actually, storage functions do possess some regularity properties, such as $C^0$ or $C^1$ regularity, under suitable assumptions.
For example, the controllable and observable systems with  positive transfer functions are dissipative with quadratic storage functions (see \cite[Section 4.4.5]{disbook} for instance).

When a system is dissipative with a given supply rate function, the question of finding a storage function has been extensively studied. This question is closely similar to the problem of finding a suitable Lyapunov function in the Lyapunov second method ensuring the stability of a system.
 For linear systems with a quadratic
supply rate function, the existence of a storage function boils down to solve a Riccati inequality.
In general, storage functions are closely related to viscosity solutions of a partial differential inequality, called a \emph{Hamilton-Jacobi} inequality. We refer the  reader to \cite[Chapter 4]{disbook} for more details on this subject.

An equivalent characterization of the dissipativity in \cite{Willems}
can be described by the so-called \emph{available storage}, which is defined as 
$$
S_a(y)\triangleq\sup_{t\geq0,\,(y(\cdot),u(\cdot))}\Big\{-\int_0^t \omega(y(\tau),u(\tau))\,d\tau\Big\},
$$
where the $\sup$ is taken over all  admissible pairs $(y(\cdot),u(\cdot))$ (meaning that satisfy the dynamic controlled system and state-control constraints) with initial value $y(0)=y$.
In fact, for every $y\in E$, $S_a(y)$ can be seen as the maximum amount of ``energy'' which can be extracted from the system with initial state $y=y(0)$.
It has been shown by Willems \cite{Willems} that
the problem $(\bar P_{[0,T]})$ is dissipative at  $(y_s, u_s)$ with respect to the supply rate function $\omega(\cdot,\cdot)$
 if and only if  $S_a(y)$ is finite for every $y\in E$.
 
We provide a specific example of a (strictly) dissipative control system.

 \begin{example}
 Let $\Omega\subset\mathbb R^n$ ($n\geq1$) be a smooth and  bounded domain, and let $\mathcal D\subset\Omega$ be a non-empty 
 open subset. Denote by $\langle\cdot,\cdot \rangle$ and $\|\cdot\|$ the inner product and norm in $L^2(\Omega)$
 respectively. For each $T>0$, consider the optimal control problem 
$$ \;\;\;\;\;\;\inf \,\int_0^T \Big(\langle y(t), \chi_\mathcal D u(t)\rangle+\|u(t)\|^2\Big)\,dt,$$
subject to
\begin{equation*}\left\{
\begin{split}
&y_t-\Delta y=\chi_\mathcal D u,\;\;\text{in}\;\;\Omega\times(0,T),\\
&y=0,\;\;\text{on}\;\;\partial\Omega\times(0,T),\\
&\|y(t)\|\leq 1, \;\; \|u(t)\|\leq 1,  \;\;\;\forall t\in[0,T].
\end{split}\right.
\end{equation*}
Notice that the corresponding static problem has a unique solution $(0,0)$.
We show that this problem is strictly dissipative at $(0,0)$ with respect to the supply rate
$$\omega (y,u)=\langle y,\chi_\mathcal D u\rangle +\|u\|^2,\;\;\forall (y, u)\in L^2(\Omega)\times L^2(\Omega).$$
In fact,  integrating the heat equation by parts leads to
\begin{equation*}
\int_0^\tau\langle y(t),\chi_\mathcal D u(t) \rangle \,dt
=\frac{\|y(\tau)\|^2-\| y(0)\|^2}{2}+\int_0^\tau\|\nabla y(t)\|^2 dt,
\end{equation*}
 for any $\tau\in[0,T]$. This, together with the definition of $\omega(\cdot,\cdot)$ above  and the Poincar\'e inequality, indicates that the strict
dissipation inequality
$$
S(y(\tau))+c\int_0^\tau\Big(\|y(t)\|^2+\|u(t)\|^2\Big)\,dt\leq 
S(y(0))+\int_0^\tau \omega(y(t),u(t))\,dt,\;\;\forall \tau\in[0,T],
$$ 
holds with $\alpha(\gamma)=c\gamma^2$ for some constant $c>0$, and a storage function $S(\cdot)$ given by
$$S(y)=\frac{1}{2}\|y\|^2, \;\;\forall y\in L^2(\Omega).$$ 
Thus,  this problem has the strict dissipativity property at $(0,0)$.
\end{example}

\subsection{Strict dissipativity implies measure-turnpike}\label{dis}

Next, we introduce a rigorous  definition of measure-turnpike for optimal control problems. 

\begin{definition}
We say that  $\{(\bar P_{[0,T]})\, \mid\, T>0\}$ enjoys the \emph{measure-turnpike property} at $(y_s,u_s)$ if, for every $\varepsilon>0$, there exists $\Lambda(\varepsilon)>0$ such that
\begin{equation}\label{bairui1}
\mathcal |Q_{\varepsilon,T}|\leq \Lambda(\varepsilon),\qquad\forall T>0,
\end{equation}
where
\begin{equation}\label{5227}
Q_{\varepsilon,T}=\left\{ t\in[0,T]\, \mid\, \left\|(y^T(t)-y_s,u^T(t)-u_s)\right\|_{X\times U}> \varepsilon \right\}.
\end{equation}

\end{definition}

We refer the reader to \cite{CarlsonBOOK, Faulwasser1, Z2} (and references therein) for similar definitions.
In this definition, the set $Q_{\varepsilon,T}$ measures the set of times at which the optimal trajectory and control stay outside an $\varepsilon$-neighborhood of $(y_s,u_s)$ for the strong topology. 
We stress that the measure-turnpike property defined above concerns both state and control. In the existing literature (see, e.g., \cite{CarlsonBOOK}), the turnpike phenomenon is often studied only for the state, meaning that, for each $\varepsilon>0$, the (Lebesgue) measure of the set $\{t\in[0,T]\, \mid\, \|y^T(t)-y_s\|_X>\varepsilon\}$ is uniformly bounded  for any $T>0$.

In the following result, we establish the measure-turnpike property for optimal solutions of $(\bar P_{[0,T]})$ (as well for $(\bar P_{[t_0,t_1]})$) under the strict dissipativity assumption, as the parameter $T$ goes to infinity. This implies that \emph{any} optimal solution $(y^T(\cdot),u^T(\cdot))$ of $(\bar P_{[0,T]})$ remains \emph{essentially} close to \emph{some} optimal solution $(y_s,u_s)$ of $(P_s)$.  Our results can be seen in the stream of the recent works \cite{Faulwasser1, PZ1, PZ2, TZ1}.

\begin{theorem}\label{turnpikeproperty1}
Let $E$ be a bounded subset of $X$. 
\begin{enumerate}
\item[(i).] If  $\{(\bar P_{[0,T]})\, \mid\, T>0\}$ is dissipative at $(y_s,u_s)$ with respect to the supply rate function $\omega(\cdot,\cdot)$ given by \eqref{5234}, then
\begin{equation}\label{5226}
\bar J^T=\bar J_s + O(1/T),
\end{equation}
as $T\rightarrow+\infty$.
\item[(ii).] If  $\{( \bar P_{[0,T]})\, \mid\, T>0\}$ is strictly dissipative at $(y_s,u_s)$ with respect to the supply rate function $\omega(\cdot,\cdot)$ given by \eqref{5234}, then it satisfies the measure-turnpike property at $(y_s,u_s)$.
\end{enumerate}
\end{theorem}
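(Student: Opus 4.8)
The plan is to prove both assertions directly from the (strict) dissipation inequalities, using the stationary pair $(y_s,u_s)$ as a comparison trajectory and the boundedness of the storage function $S$ on the bounded set $E$. The observation underlying both parts is that, for the optimal pair $(y^T(\cdot),u^T(\cdot))$, the definition of $\omega$ in \eqref{5234} gives
\[
\int_0^T \omega(y^T(t),u^T(t))\,dt = \int_0^T f^0(y^T(t),u^T(t))\,dt - T\,\bar J_s = T\big(\bar J^T-\bar J_s\big),
\]
so the time-integral of the supply rate equals $T$ times the optimality gap $\bar J^T-\bar J_s$. Everything then reduces to controlling the endpoint values $S(y^T(0))$ and $S(y^T(T))$, which both lie in the bounded set $E$.

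For part (i), I would first obtain the upper bound $\bar J^T\le \bar J_s$ immediately: the constant pair $y(\cdot)\equiv y_s$, $u(\cdot)\equiv u_s$ is admissible for $(\bar P_{[0,T]})$ (it solves the state equation since $Ay_s+f(y_s,u_s)=0$ and meets the constraints $y_s\in E$, $u_s\in F$), and its time-averaged cost equals $f^0(y_s,u_s)=\bar J_s$, so optimality yields $\bar J^T\le\bar J_s$. For the matching lower bound, apply the dissipation inequality \eqref{5224} to the optimal pair at $\tau=T$ and insert the displayed identity:
\[
T\big(\bar J^T-\bar J_s\big)\ \ge\ S(y^T(T))-S(y^T(0))\ \ge\ \inf_E S-\sup_E S.
\]
Since $E$ is bounded and $S$ is bounded from below and locally bounded, the right-hand side is a finite constant $-C$ independent of $T$, whence $\bar J_s-C/T\le\bar J^T\le\bar J_s$, which is \eqref{5226}.

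For part (ii), I would run the same computation with the strict dissipation inequality \eqref{5225} at $\tau=T$, which after rearrangement reads
\[
\int_0^T \alpha\big(\|(y^T(t)-y_s,u^T(t)-u_s)\|_{X\times U}\big)\,dt \ \le\ S(y^T(0))-S(y^T(T)) + T\big(\bar J^T-\bar J_s\big).
\]
By part (i) the term $T(\bar J^T-\bar J_s)$ is uniformly bounded in $T$, and $S(y^T(0))-S(y^T(T))\le \sup_E S-\inf_E S$ is likewise bounded, so there is a constant $C>0$ with $\int_0^T \alpha(\cdots)\,dt\le C$ for all $T>0$. Finally I would apply a Markov-type inequality: on the set $Q_{\varepsilon,T}$ of \eqref{5227} the integrand satisfies $\alpha(\|(y^T(t)-y_s,u^T(t)-u_s)\|_{X\times U})\ge\alpha(\varepsilon)$ by monotonicity of the $\mathcal K$-class function $\alpha$, so $\alpha(\varepsilon)\,|Q_{\varepsilon,T}|\le C$ and therefore $|Q_{\varepsilon,T}|\le C/\alpha(\varepsilon)=:\Lambda(\varepsilon)$ for every $T>0$, which is exactly \eqref{bairui1}.

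The main obstacle is the uniform (in $T$) control of the storage function at the two endpoints of the optimal trajectory, i.e. the boundedness of $S$ on $E$; this is precisely where the hypothesis that $E$ is bounded enters, and one must be slightly careful in the infinite-dimensional setting, since local boundedness of $S$ does not automatically upgrade to boundedness on a norm-bounded set. A secondary point to verify is that $\alpha(\varepsilon)>0$ for every $\varepsilon>0$, which holds for a genuine $\mathcal K$-class function and is what makes the concluding Markov estimate meaningful. Everything else is a direct manipulation of the (strict) dissipation inequality.
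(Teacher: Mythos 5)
Your proposal is correct and follows essentially the same route as the paper: the upper bound $\bar J^T\le\bar J_s$ from admissibility of the constant pair $(y_s,u_s)$, the lower bound from the dissipation inequality with the storage-function endpoints controlled via boundedness of $E$, and a Markov-type estimate on $Q_{\varepsilon,T}$ using $\alpha(\varepsilon)>0$ for part (ii). The only cosmetic difference is the order (the paper proves (ii) first, using only $\bar J^T\le\bar J_s$ rather than the full statement of (i)), and your caveat about local boundedness of $S$ versus boundedness on a norm-bounded set in infinite dimensions is a fair observation that applies equally to the paper's own argument.
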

\medskip
\begin{remark}
\emph{
Note that $(\bar P_{[0,T]})$ is defined without any constraints on the terminal states. However,
under appropriate controllability assumptions (similar to $(H_3)$ in Section \ref{general}),
one can also treat the case of terminal state constraint $R(\cdot)=0$ (see also  \cite{Faulwasser1} and \cite{Grune2}).}
\end{remark}

\begin{remark}
\emph{
From Theorem \ref{turnpikeproperty1},  we see that  strict dissipativity is sufficient for the measure-turnpike property for the optimal control problem. This fact was observed in the previous works \cite{Grune1, Faulwasser1, Grune2}. For the converse statements, i.e., results which show that the turnpike property implies  strict dissipativity, we refer the reader to \cite{G3} and \cite{Faulwasser1}.  In \cite{G3}, the authors first defined a turnpike-like behavior concerning all trajectories whose associate cost is close to the optimal one. This behavior is stronger than the measure-turnpike property,  which only concerns the optimal  trajectories.  Then, the implication ``\,turnpike-like behavior $\Rightarrow$ strict dissipativity''  was proved in \cite{G3}.  Besides, the implication  ``\,exact turnpike property $\Rightarrow$ strict dissipativity along optimal trajectories'' was shown 
in \cite{Faulwasser1}, where the exact turnpike property means that the optimal solutions have to remain exactly at an optimal steady-state for most part of the long-time horizon.
}

\end{remark}


\begin{proof}[\textbf{Proof of Theorem~\ref{turnpikeproperty1}}.]
We first prove the second point of the theorem.
Let $T>0$ and let $(y^T(\cdot),u^T(\cdot))$ be any optimal solution of the problem $(\bar P_{[0,T]})$.  By the strict dissipation inequality \eqref{5225} applied to $(y^T(\cdot),u^T(\cdot))$, we have
\begin{equation}\label{5123}
\frac{1}{T}\int_0^T\alpha\big(\|(y^T(t)-y_s,u^T(t)-u_s)\|_{X\times U}\big)\,dt\leq
\bar J^T-\bar J_s+\frac{S(y^T(0))-S(y^T(T))}{T}.
\end{equation}
Note that $\alpha\big(\|(y^T(t)-y_s,u^T(t)-u_s)\|_{X\times U}\big)\geq \alpha(\varepsilon)$ whenever $t\in Q_{\varepsilon,T}$, where $Q_{\varepsilon,T}$ is defined by \eqref{5227}.
Since $E\subset X$ is a bounded subset and $S(\cdot)$ is locally bounded, there exists $M>0$ such that $|S(y)|\leq M$ for every $y\in E$.
Therefore, it follows from \eqref{5123} that
\begin{equation}\label{5132}
\frac{|Q_{\varepsilon,T}|}{T}\leq \frac{1}{\alpha(\varepsilon)}\left( \bar J^T-\bar J_s+\frac{2M}{T}\right).
\end{equation}
On the other hand, noting that $(y_s,u_s)$ is admissible for $(\bar P_{[0,T]})$ for any $T>0$, we have
\begin{equation}\label{5133}
\bar J^T\leq \frac{1}{T}\int_0^Tf^0(y_s,u_s)\,dt=f^0(y_s,u_s)=\bar J_s.
\end{equation}
This, combined with \eqref{5132}, leads to $|Q_{\varepsilon,T}|\leq \frac{2M}{\alpha(\varepsilon)}$ for every $T>0$. The second point of the theorem follows.

Let us now prove the first point.
On the one hand, it follows from \eqref{5133} that 
\begin{equation*}\label{5121}
\limsup_{T\rightarrow \infty} \bar J^T\leq \bar J_s.
\end{equation*}
By the dissipation inequality \eqref{5224} applied to any optimal solution $(y^T(\cdot),u^T(\cdot))$ of $(\bar P_{[0,T]})$, we get
\begin{equation*}
S(y^T(0))+\int_0^Tf^0(y^T(t),u^T(t))\,dt\geq
Tf^0(y_s,u_s)+S(y^T(T)),
\end{equation*}
which leads to
\begin{equation*}
\bar J_s\leq \bar J^T+\frac{S(y^T(0))-S(y^T(T))}{T}.
\end{equation*}
Since $E$ is a bounded subset in $X$ and since the storage function $S(\cdot)$ is locally bounded and bounded below, we infer that
\begin{equation*}
\bar J_s\leq \liminf_{T\rightarrow\infty}\bar J^T.
\end{equation*}
Then \eqref{5226} follows.
\end{proof}

\begin{remark}
\emph{
The above proof borrows ideas from \cite[Theorem 5.3]{Grune2} and \cite{Faulwasser1}. We used in a crucial way the fact that any solution of the steady-state problem $(P_s)$ is admissible for the problem $(\bar P_{[0,T]})$ under consideration. This is due to the fact that  the terminal states are let free in $(\bar P_{[0,T]})$.  Note that we only use the boundedness of $y^T(0)$ in the proof.}
\end{remark}

\subsection{Dissipativity and Assumption (H)}

Under the (strict) dissipativity property,  we can verify the abstract \textbf{Assumption (H)} for the autonomous case in Section~\ref{general}.

\begin{proposition}\label{thm3}
Assume that, for any $t_0$ and $t_1$, the problem $(\bar P_{[t_0,t_1]})$ is dissipative at $(y_s,u_s)$ with the supply rate $\omega(y,u)=f^0(y,u)-f^0(y_s,u_s)$, and the associated storage function $S(\cdot)$  is bounded on $E$. Then
\begin{itemize}
\item[(i).] $\bar J_{[t_0,+\infty)} = \bar J_s$, $\forall t_0\in \mathbb R$.
\item[(ii).] There exists a turnpike set $\mathcal{T} = \{ y_s \}$ such that 
$(H_1)$ and
 $(H_2)$ are satisfied.
\item[(iii).] Moreover, if  $(H_3)$ is satisfied and it is strictly dissipative at $(y_s,u_s)$ with dissipation rate $d(\cdot)=\alpha(\|y(\cdot)-y_s\|_X)$, then $(H_4)$ is satisfied with $\beta(\cdot)=\alpha(\cdot)$ and $\text{dist}\,(y,\mathcal T)=\|y-y_s\|_X$.
 \end{itemize}

\end{proposition}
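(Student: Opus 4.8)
The plan is to read each hypothesis as an a~priori statement about time averages and to feed it the (strict) dissipation inequality, so that everything reduces to the averaging argument already used in Theorem~\ref{turnpikeproperty1}, with the boundedness of $S$ on $E$ playing the role that boundedness of $E$ played there. For part (i) I would first prove the two-sided estimate $\bar J_{[t_0,t_1]}=\bar J_s+O(1/(t_1-t_0))$. The upper bound uses that the constant pair $(y_s,u_s)$ is admissible for $(\bar P_{[t_0,t_1]})$: since $Ay_s+f(y_s,u_s)=0$ one checks $e^{(\tau-t_0)A}y_s+\int_{t_0}^{\tau}e^{(\tau-t)A}f(y_s,u_s)\,dt=y_s$, so it is a mild solution and $\bar J_{[t_0,t_1]}\le f^0(y_s,u_s)=\bar J_s$. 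For the lower bound I apply the dissipation inequality \eqref{5224} to an optimal pair of $(\bar P_{[t_0,t_1]})$, divide by $t_1-t_0$, and use $|S|\le M$ on $E$ to bound $S(y(t_1))-S(y(t_0))$ by $2M$, giving $\bar J_{[t_0,t_1]}\ge \bar J_s-2M/(t_1-t_0)$. Letting $t_1\to+\infty$ yields $\bar J_{[t_0,+\infty)}=\bar J_s$ for every $t_0$.

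For part (ii) I take $\mathcal T=\{y_s\}$, which is closed. The identity $V_{[t_0,+\infty)}(y_s)=\bar J_s=\bar J_{[t_0,+\infty)}$ of $(H_1)$ is obtained exactly as in (i): the constant pair gives $V_{[t_0,t_1]}(y_s)\le\bar J_s$, while \eqref{5224} applied to any admissible pair with $y(t_0)=y_s$ and divided by $t_1-t_0$ (with the storage terms absorbed into $O(1/(t_1-t_0))$) gives the matching lower bound. For the viability clause of $(H_2)$, the constant trajectory $t\mapsto y_s$ remains in $\mathcal T$ for all $t\ge t_0$, so a viable pair exists. The ``moreover'' (optimality) clause is where I expect the genuine difficulty: an admissible pair with $y(t)\equiv y_s$ forces $(y_s,u(t))$ to satisfy the static constraint a.e., hence $f^0(y_s,u(t))\ge\bar J_s$, and \eqref{5224} gives $\int_{t_0}^{\tau}\omega(y_s,u(t))\,dt\ge 0$, i.e.\ $\liminf\frac1{t-t_0}C_{[t_0,t]}\ge \bar J_s=V_{[t_0,+\infty)}(y_s)$. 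The matching upper bound, however, need \emph{not} hold if the optimal static control is non-unique: a trajectory sitting at $y_s$ with a strictly suboptimal $u(\cdot)$ would have limiting average cost strictly above $\bar J_s$. I would therefore either add a nondegeneracy hypothesis ($f^0(y_s,\cdot)$ constant on the static fibre over $y_s$, as holds in the quoted examples) or read the clause as asserting the existence of an optimal viable trajectory, realized by $(y_s,u_s)$.

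For part (iii) I set $\beta=\alpha$ and $\text{dist}(y,\mathcal T)=\|y-y_s\|_X$, and recall from Remark~\ref{re2} that $\inf_{y\in X}V_{[t_0,t_1]}(y)=\bar J_{[t_0,t_1]}$. Then $(H_4)$ is precisely the averaged strict dissipation inequality \eqref{5225}: applying it on $[t_0,t_1]$ to the relevant optimal trajectory and dividing by $t_1-t_0$ gives
\[
\frac1{t_1-t_0}\int_{t_0}^{t_1}f^0(y(t),u(t))\,dt \ge \bar J_s + \frac1{t_1-t_0}\int_{t_0}^{t_1}\alpha\big(\|y(t)-y_s\|_X\big)\,dt + \frac{S(y(t_1))-S(y(t_0))}{t_1-t_0}.
\]
Boundedness of $S$ on $E$ makes the last term $O(1/(t_1-t_0))=\mathrm o(1)$, and by part (i) $\bar J_s=\bar J_{[t_0,t_1]}+\mathrm o(1)$; substituting produces the coercivity inequality with $\beta=\alpha$. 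The one point requiring care is to match the trajectory in the dissipation-rate integral with the one on the left-hand side of $(H_4)$: applying \eqref{5225} to the trajectory realizing $V_{[t_0,t_1]}(\hat y)$ yields the inequality with $V_{[t_0,t_1]}(\hat y)$ on the left, which is the form used in the proof of Theorem~\ref{thm1}(ii). Apart from this bookkeeping, the computations are routine; the only real obstacle is the static-minimizer degeneracy affecting the optimality clause of $(H_2)$, everything else being controlled by the assumed boundedness of $S$ on $E$.
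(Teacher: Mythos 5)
Your parts (i) and (ii) follow the paper's route exactly: admissibility of the constant pair $(y_s,u_s)$ gives the upper bounds $\bar J_{[t_0,t_1]}\le V_{[t_0,t_1]}(y_s)\le \bar J_s$, and the averaged dissipation inequality with $|S|\le M$ on $E$ gives the matching lower bound up to $O(1/(t_1-t_0))$. Your reservation about the optimality clause of $(H_2)$ is well taken: the paper only proves $V_{[t_0,+\infty)}(y_s)=\bar J_s$ by this squeeze and then asserts $(H_2)$ without discussing admissible pairs that sit at $y_s$ with a control other than $u_s$; dissipativity only yields $\liminf_{t\to\infty}\frac{1}{t-t_0}C_{[t_0,t]}\ge \bar J_s$ for such pairs, so the clause as literally stated does require either a nondegeneracy assumption on the static fibre over $y_s$ or the weaker reading you propose. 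This is a gap in the paper, not in your argument.

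The one place where your plan would not deliver the stated conclusion is the trajectory bookkeeping in (iii). Applying \eqref{5225} to the trajectory realizing $V_{[t_0,t_1]}(\hat y)$ puts the dissipation integral along the free-endpoint minimizer, whereas $(H_4)$ (and its use via \eqref{jican2} in the proof of Theorem \ref{thm1}(ii)) requires the integral along an optimal trajectory of the \emph{constrained} problem $(P_{[t_0,t_1]})$, which is a different curve in general. The paper resolves the mismatch the other way around: it applies \eqref{5225} to the optimal pair $(\tilde y(\cdot),\tilde u(\cdot))$ of $(P_{[t_0,t_1]})$, which gives a lower bound on $J_{[t_0,t_1]}$,
\begin{equation*}
J_{[t_0,t_1]}\ \ge\ \bar J_s+\frac{1}{t_1-t_0}\int_{t_0}^{t_1}\alpha\big(\|\tilde y(t)-y_s\|_X\big)\,dt+\frac{S(\tilde y(t_1))-S(\tilde y(t_0))}{t_1-t_0},
\end{equation*}
and then combines the trivial inequality $V_{[t_0,t_1]}(\tilde y(t_0))\ge\inf_{y\in X}V_{[t_0,t_1]}(y)$ with the correction term $\bar J_s-J_{[t_0,t_1]}$, which is $\mathrm o(1)$ by part (i) together with \eqref{mi1}. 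This is precisely where the hypothesis $(H_3)$ — which you list but never use — enters: \eqref{mi1} is the conclusion of Theorem \ref{thm1}(i) and needs $(H_1)$–$(H_3)$. Without this detour you only control $J_{[t_0,t_1]}$, and since $V_{[t_0,t_1]}(\hat y)\le J_{[t_0,t_1]}$ the inequality goes the wrong way for a direct substitution into the left-hand side of $(H_4)$. Everything else in your proposal matches the paper's proof.
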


\begin{proof}

$(i)$. With a slight modification, the proof is the same as that of the first point of
Theorem~\ref{turnpikeproperty1}.

$(ii)$. Since $(y_s,u_s)$ is an equilibrium point, the constant pair $(y_s,u_s)$ is admissible on any time interval.
By the definition,
$$
\bar J_{[t_0,t_1]}\leq V_{[t_0,t_1]}(y_s)\leq \frac{1}{t_1-t_0}\int_{t_0}^{t_1}f^0(y_s,u_s)\,dt=\bar J_s.
$$
This, along with $(i)$, indicates that 
$$
V_{[t_0,+\infty)}(y_s)=\lim_{t_1\rightarrow+\infty}V_{[t_0,t_1]}(y_s)= \bar J_s.
$$
Hence, the assumptions $(H_1)$ and $(H_2)$ hold.

$(iii).$
Let $(\tilde y(\cdot),\tilde u(\cdot))$  be an optimal solution to the problem $(P_{[t_0,t_1]})$.
Then, by the strict dissipativity property we have
$$
S(\tilde y(t_1))+ \int_{t_0}^{t_1} \alpha(\|\tilde y(t)-y_s\|_X)\, dt \leq S(\tilde y(t_0))+\int_{t_0}^{t_1} \big(f^0(\tilde y(t),\tilde u(t))-f^0(y_s,u_s)\big)\, dt.
$$
Which is equivalent to 
\begin{equation}\label{xie1v}
J_{[t_0,t_1]}\geq \bar J_s+ \frac{1}{t_1-t_0}\int_{t_0}^{t_1}\alpha(\|\tilde y(t)-y_s\|_X)\, dt
+ \frac{S(\tilde y(t_1))-S(\tilde y(t_0))}{t_1-t_0}.
\end{equation}
Because
\begin{equation*}
\begin{split}
J_{[t_0,t_1]}&=\bar J_{[t_0,t_1]}+(J_{[t_0,t_1]}-\bar J_{[t_0,t_1]})\\
&=\inf_{y\in X}V_{[t_0,t_1]}(y)+\big(J_{[t_0,t_1]}-\inf_{y\in X}V_{[t_0,t_1]}(y)\big)\\
&\leq V_{[t_0,t_1]}(\tilde y(t_0))+\big(J_{[t_0,t_1]}-\inf_{y\in X}V_{[t_0,t_1]}(y)\big).
\end{split}
\end{equation*}
The last inequality, along with \eqref{xie1v}, indicates that
\begin{multline}\label{xie2}
V_{[t_0,t_1]}(\tilde y(t_0))\geq
\inf_{y\in X}V_{[t_0,t_1]}(y)
+ \frac{1}{t_1-t_0}\int_{t_0}^{t_1}\alpha(\|\tilde y(t)-y_s\|_X)\, dt\\
+ \frac{S(\tilde y(t_1))-S(\tilde y(t_0))}{t_1-t_0}
+ \bar J_s-J_{[t_0,t_1]}.
\end{multline}
As the storage function $S(\cdot)$ is bounded on $E$,
by $(i)$ and \eqref{mi1} we infer 
$$\lim_{t_1\rightarrow+\infty}J_{[t_0,t_1]}=\bar J_s,$$ 
Hence, the sum of last three terms in \eqref{xie2} is an infinitesimal 
quantity as $t_1\rightarrow +\infty$, and thus $(H_4)$ holds.

\end{proof}

\begin{remark}\emph{
Proposition \ref{thm3} explains  the role of dissipativity in the general turnpike phenomenon. 
It reflects that dissipativity allows one to identify the limit value $\bar J_{[t_0,+\infty)}$, that dissipativity implies $(H_1)$ and $(H_2)$, and that strict dissipativity, plus $(H_3)$, implies $(H_4)$. 
Recall that $(H_3)$ is a controllability assumption.}
\end{remark}

\subsection{Some comments on the periodic turnpike phenomenon}
Inspired from \cite{Willems} and \cite{Zon}, we  introduce the  concept of (strict) dissipativity with respect to  a periodic trajectory. Let $A(\cdot)$, $f(\cdot)$ and $f^0(\cdot)$ be periodic in time with a period $\Pi>0$. 

\begin{definition}\label{perdis}
We say the problem $(\bar P_{[t_0,t_1]})$ is dissipative with respect to a $\Pi$-periodic trajectory $(\hat y(\cdot),\hat u(\cdot))$ with respect to the supply rate function
\begin{equation*}
\omega(t,y,u)= f^0(t,y,u)-f^0(t,\hat y(t), \hat u(t)),\qquad \forall(t,y,u)\in \mathbb R\times E\times F,
\end{equation*}
if there exists a locally bounded and bounded from below storage function $S:\mathbb R\times E\rightarrow\mathbb R$, $\Pi$-periodic in time,  such that
\begin{equation*}
S(\tau_0,y(\tau_0))+\int_{\tau_0}^{\tau_1}\omega(t,y(t),u(t))\,dt\geq S(\tau_1,y(\tau_1))\;\;\;\text{for all}\;\; t_0\leq \tau_0<\tau_1\leq t_1,
\end{equation*}
for any admissible pair $(y(\cdot),u(\cdot))$.  If, in addition, there exists a $\mathcal{K}$-class function $\alpha(\cdot)$ such that
\begin{equation*}
S(\tau_0,y(\tau_0))+\int_{\tau_0}^{\tau_1}\omega(t,y(t),u(t))\,dt\geq S(\tau_1,y(\tau_1))+\int_{\tau_0}^{\tau_1}\alpha\big(\|(y(t)-\hat y(t), u(t)-\hat u(t))
\|_{X\times U}\big)\,dt,
\end{equation*}
we say it is strictly dissipative with respect to a $\Pi$-periodic trajectory $(\hat y(\cdot), \hat u(\cdot))$.
\end{definition}

The notion of (strict) dissipativity with respect to a periodic trajectory in Definition \ref{perdis} allows one to identify the optimal control problem $(\bar P_{[t_0,t_1]})$ as a periodic one. 
Consider the periodic optimal control problem 
$$
(\bar P_{\textrm{per}}) \qquad \left\{\begin{array}{l}
\bar J_{\textrm{per}}= \inf\frac{1}{\Pi}C_{[0,\Pi]}(y(\cdot),u(\cdot)),\\[2mm]
\text{subject to}\;\;\;
\dot y(t) = A(t)y+f(t,y(t),u(t)),\quad (y(t),u(t))\in E\times F,\;\;t\in[0,\Pi],\\[2mm]
y(0)=y(\Pi).\\
\end{array}\right.
$$
We assume that $(\bar P_{\textrm{per}})$ has at least one periodic optimal solution $(\bar y(\cdot),\bar u(\cdot))$ on $[0,\Pi]$ (see e.g. \cite{Barbu} for the existence of periodic  optimal solutions), and we set $\bar J_\textrm{per} = \frac{1}{\Pi}\int_0^\Pi f^0(t,\bar y(t),\bar u(t))\, dt$ (optimal value of $(\bar P_{\textrm{per}})$).   Let us extend $(\bar y(\cdot), \bar u(\cdot))$ in $\mathbb R$ by periodicity. Likewise, we have the following result.

\begin{proposition}
Assume that, for any $t_0$ and $t_1$, the problem $(\bar P_{[t_0,t_1]})$ is dissipative with respect to the $\Pi$-periodic optimal trajectory $(\bar y(\cdot), \bar u(\cdot))$, with the supply rate $\omega(t, y,u)=f^0(t,y,u)-f^0(t,\bar y(t), \bar u(t))$, and  the associated storage function $S(\cdot)$  is bounded on $E$ for all times. Then
\begin{itemize}
\item[(i).] $\bar J_{[t_0,+\infty)} = \bar J_{per}$, $\forall t_0\in \mathbb R$.
\item[(ii).] There exists a turnpike set $\mathcal{T} = \{ \bar y(t)\ \mid\ t\in[0,\Pi] \}$  such that 
$(H_1)$ and
 $(H_2)$ are satisfied.
\item[(iii).] Moreover, if  $(H_3)$ is satisfied and $(\bar P_{[t_0,t_1]})$ is strictly dissipative with respect to the \emph{$\Pi$-periodic} trajectory $(\bar y(\cdot), \bar u(\cdot))$,
with dissipation rate $\alpha(\cdot)$, then  $(H_4)$ is satisfied with $\beta(\cdot)=\alpha(\cdot)$ and $\text{dist}\, (y,\mathcal T)=\min_{t\in[0,\Pi]}\|y-\bar y(t)\|_X$.
 \end{itemize}
\end{proposition}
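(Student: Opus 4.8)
The plan is to transpose, essentially line by line, the proof of Proposition~\ref{thm3} to the periodic setting, replacing the stationary optimal point $y_s$ by the $\Pi$-periodic reference trajectory $\bar y(\cdot)$, the stationary value $\bar J_s$ by the periodic average $\bar J_{\mathrm{per}}$, and the autonomous storage function $S(\cdot)$ by the $\Pi$-periodic one $S(\cdot,\cdot)$ of Definition~\ref{perdis}. The single genuinely new ingredient is that the reference cost is no longer constant in time; I will repeatedly use that, since $t\mapsto f^0(t,\bar y(t),\bar u(t))$ is $\Pi$-periodic and bounded along the periodic trajectory, for $t_1\to+\infty$,
$$
\frac{1}{t_1-t_0}\int_{t_0}^{t_1}f^0(t,\bar y(t),\bar u(t))\,dt \;\longrightarrow\; \frac{1}{\Pi}\int_0^\Pi f^0(t,\bar y(t),\bar u(t))\,dt = \bar J_{\mathrm{per}},
$$
the discrepancy over an incomplete final period being $O(1/(t_1-t_0))$.

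For $(i)$ I would argue by a two-sided estimate as in Theorem~\ref{turnpikeproperty1}$(i)$. The periodic extension $(\bar y(\cdot),\bar u(\cdot))$ solves the dynamics and satisfies $(\bar y,\bar u)\in E\times F$, hence is admissible for $(\bar P_{[t_0,t_1]})$ on every interval, so $\bar J_{[t_0,t_1]}\le\frac{1}{t_1-t_0}\int_{t_0}^{t_1}f^0(t,\bar y,\bar u)\,dt$ and thus $\limsup_{t_1}\bar J_{[t_0,t_1]}\le\bar J_{\mathrm{per}}$ by the averaging above. For the reverse bound I apply the dissipation inequality of Definition~\ref{perdis} to an arbitrary admissible pair; dividing by $t_1-t_0$ and using that $S$ is bounded on $E$ uniformly in time (so the boundary terms are $O(1/(t_1-t_0))$) gives $\frac{1}{t_1-t_0}C_{[t_0,t_1]}(y,u)\ge\frac{1}{t_1-t_0}\int f^0(t,\bar y,\bar u)\,dt+O(1/(t_1-t_0))$, and taking the infimum over admissible pairs then letting $t_1\to+\infty$ yields $\liminf_{t_1}\bar J_{[t_0,t_1]}\ge\bar J_{\mathrm{per}}$, whence $(i)$.

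For $(ii)$, with $\mathcal T=\{\bar y(t)\mid t\in[0,\Pi]\}$, the inequality $V_{[t_0,+\infty)}(y)\ge\bar J_{[t_0,+\infty)}=\bar J_{\mathrm{per}}$ holds for every $y$, while following the periodic orbit realizes the time-average cost $\bar J_{\mathrm{per}}$, giving the reverse inequality and hence $(H_1)$; viability in $(H_2)$ is witnessed by that same orbit, which stays in $\mathcal T$ for all time, and its optimality (the limit-value identity) follows from $(H_1)$ and $(i)$. For $(iii)$ I would reproduce the algebra leading to \eqref{xie2} for an optimal pair $(\tilde y,\tilde u)$ of $(P_{[t_0,t_1]})$, the only change being the lower bound on the dissipation rate: since $\bar y(t)\in\mathcal T$ and $\alpha$ is monotone,
$$
\alpha\big(\|(\tilde y(t)-\bar y(t),\tilde u(t)-\bar u(t))\|_{X\times U}\big)\ge\alpha\big(\|\tilde y(t)-\bar y(t)\|_X\big)\ge\alpha\big(\mathrm{dist}(\tilde y(t),\mathcal T)\big),
$$
with $\mathrm{dist}(y,\mathcal T)=\min_{t\in[0,\Pi]}\|y-\bar y(t)\|_X$. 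Inserting this into the strict dissipation inequality and noting that the boundary storage terms together with $\frac{1}{t_1-t_0}\int f^0(\bar y,\bar u)\,dt-J_{[t_0,t_1]}$ form an $\mathrm o(1)$ as $t_1\to+\infty$ (by $(i)$ and Theorem~\ref{thm1}$(i)$, which give $J_{[t_0,t_1]}\to\bar J_{\mathrm{per}}$) yields $(H_4)$ with $\beta=\alpha$.

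The main obstacle I anticipate lies in $(ii)$, specifically the phase-matching implicit in $(H_1)$ and $(H_2)$: these are demanded at every initial time $t_0$ and every $y\in\mathcal T$, yet in a genuinely non-autonomous $\Pi$-periodic system a trajectory started at $(t_0,\bar y(\tau))$ can follow the periodic orbit only when $\tau\equiv t_0\ (\mathrm{mod}\ \Pi)$. Treating an arbitrary phase rigorously requires either a phase-matching hypothesis along $\mathcal T$ or, as in Remark~\ref{nonc}, a controllability/asymptotic-reachability argument joining the orbit at the correct phase; the periodic averaging of the first paragraph guarantees that such a finite-cost correction leaves the infinite-horizon average equal to $\bar J_{\mathrm{per}}$, so the limit value is unaffected.
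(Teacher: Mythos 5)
Your proof follows essentially the same route as the paper: part (i) is the same two-sided estimate (admissibility of the extended periodic orbit for the upper bound, and the periodic dissipation inequality with uniformly bounded storage for the lower bound --- the paper evaluates at times $t_0+k\Pi$ and lets $k\to\infty$, which is your argument restricted to integer multiples of the period), while for (ii)--(iii) the paper merely states that the rest is similar to the proof of Proposition~\ref{thm3}, which is exactly the transposition you carry out, including bounding $\alpha(\|(\tilde y(t)-\bar y(t),\tilde u(t)-\bar u(t))\|_{X\times U})$ from below by $\alpha(\mathrm{dist}(\tilde y(t),\mathcal T))$. The phase-matching subtlety you flag in (ii) is a genuine point that the paper's proof (which writes out only (i) in detail) does not address either.
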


\begin{proof}
We only show the proof of $(i)$, as the rest is similar to the arguments in the proof of Proposition~\ref{thm3}.

Since $(\bar y(\cdot),\bar u(\cdot))$ is an admissible trajectory in $[t_0,t_0+k\Pi]$ for any $k\in\mathbb N$, we have
$\bar J_{[t_0,+\infty)} \leq \bar J_\textrm{per}$. Let us prove the converse inequality.
By the periodic dissipativity in Definition \ref{perdis}, we have
$$
S(t_0, y(t_0)) + \int_{t_0}^{t_0+k\Pi} f^0(t,y(t),u(t))\, dt \geq k \int_{t_0}^{t_0+\Pi} f^0(t,\bar y(t),\bar u(t))\, dt + S(t_0+k\Pi, y(t_0+k\Pi))$$
for any admissible trajectory $(y(\cdot), u(\cdot))$. Since $\bar J_\textrm{per} = \frac{1}{\Pi}\int_{0}^{\Pi} f^0(t,\bar y(t),\bar u(t))\, dt$, it follows that
$$
\bar J_\textrm{per} \leq \frac{1}{k\Pi}\int_{t_0}^{t_0+k\Pi} f^0(t,y(t),u(t))\, dt + \frac{S(t_0,y(t_0))-S(t_0+k\Pi,y(t_0+k\Pi))}{k\Pi} .
$$
Letting $k$ tend to infinity, and taking the infimum over all possible admissible trajectories, we get that $\bar J_\textrm{per} \leq \bar J_{[t_0,+\infty)}$.

\end{proof}

\section{Relationship with (strict) strong duality}\label{sec_dissip}

After having detailed a motivating example in Section \ref{guai2}, we recall in Section \ref{guai3}  the notion of (strict) strong 
duality, and we establish in Section \ref{guai4} that strict strong duality implies strict dissipativity (and thus measure-turnpike according to Section \ref{dis}).

\subsection{A motivating example}\label{guai2}
To illustrate the effect of Lagrangian  function associated with the static problem when one derives the measure-turnpike  property for the evolution control system, we consider the simplest model of heat equation with control constraints.

Let $\Omega\subset\mathbb R^n$,  $n\geq1$,  be a bounded domain with a smooth boundary $\partial \Omega$, and let $\mathcal D\subset\Omega$ be a non-empty 
 open subset. Throughout  this subsection, we denote by $\langle\cdot,\cdot\rangle$ and $\|\cdot\|$
the  inner product and norm in $L^2(\Omega)$, respectively; by $\chi_\mathcal D$ the characteristic function of $\mathcal D$. 
For any $T>0$, consider the optimal control  problem for the heat equation with pointwise control constraints:
$$\;\;\;\;\;\;\;\; \bar J^T=\inf_{u(\cdot)\in L^2(0,T;\,\mathcal U_{ad})}\frac{1}{2T}\int_0^T\Big(\|y(t)-y_d\|^2+\|u(t)\|^2\Big)\,dt$$
subject to
\begin{equation*}\left\{
\begin{split}
&y_t-\Delta y=\chi_{\mathcal D} u,\;\;\text{in}\;\;\Omega\times(0,T),\\
&y=0,\;\;\text{on}\;\;\partial\Omega\times(0,T),\\
&y(\cdot,0)=y_0,\;\;\text{in}\;\;\Omega,
\end{split}\right.
\end{equation*}
where $y_d\in L^2(\Omega)$, $y_0\in L^2(\Omega)$  and 
$$
\mathcal U_{ad}= \Big\{u\in L^2(\Omega)\;\,|\,\; u_a(x)\leq u(x)\leq u_b(x)\;\;\text{for a.e.}\;\;x\in \Omega\Big\}, 
$$
with $u_a$ and $u_b$ being in $L^2(\Omega)$. Assume that $(y^T(\cdot),u^T(\cdot))$ (the optimal pair obviously depends on the time horizon) is the unique optimal solution. We want to  study the long time behavior of optimal solutions,
i.e., the optimal pair stays in a neighborhood of a static optimal solution at most of the time horizon.

As before, we consider the static optimal control problem stated below
$$
\;\;\;\;\;\;\;\bar J_s=\inf_{u\in\mathcal U_{ad}}\frac{1}{2}\Big(\|y-y_d\|^2+\|u\|^2\Big)
$$
subject to
\begin{equation*}\left\{
\begin{split}
&-\Delta y=\chi_\mathcal D u,\;\;\text{in}\;\;\Omega,\\
&y=0,\;\;\text{on}\;\;\partial\Omega.\\
\end{split}\right.
\end{equation*}
Assume that $(y_s,u_s)$ is the unique optimal solution to $(P_s)$.

For this purpose, given every $\varepsilon>0$, we define the set
$$
Q_{\varepsilon, T}=\Big\{t\in[0,T]\;\;|\;\; \|y^T(t)-y_s\|^2+\|u^T(t)-u_s\|^2>\varepsilon \Big\},
$$ 
which measures the time at which the optimal pair is outside of the $\varepsilon$-neighborhood of $(y_s,u_s)$.

\begin{proposition}\label{he}
The following convergence hold
$$
\frac{1}{T}\int_0^Ty^T(t)\,dt\rightarrow y_s \;\;\text{and}\;\;\frac{1}{T}\int_0^Tu^T(t)\,dt\rightarrow u_s\;\;
\text{in}\;\;L^2(\Omega),\;\;\text{as}\;\;T\rightarrow\infty.
$$
Moreover, for each  $\varepsilon>0$, it holds true that 
$$
|Q_{\varepsilon,T}|\leq O\Big(\frac{1}{\varepsilon}\Big), \;\;\text{for all}\;\; T\geq1,
$$
i.e., the measure-turnpike property holds.
\end{proposition}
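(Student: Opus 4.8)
The plan is to exploit the convex structure of the static problem $(P_s)$ through its Lagrangian (strong duality) and to turn the associated Karush--Kuhn--Tucker system into a strict dissipation inequality for the evolution problem, thereby reducing matters to an argument parallel to the proof of Theorem~\ref{turnpikeproperty1}(ii). First I would write the optimality system for $(P_s)$: the optimal pair $(y_s,u_s)$ is characterized by the state equation $-\Delta y_s=\chi_{\mathcal D}u_s$, the adjoint equation $-\Delta p_s=y_s-y_d$ with $p_s|_{\partial\Omega}=0$, and the variational inequality $\langle u_s+\chi_{\mathcal D}p_s,\,u-u_s\rangle\geq 0$ for every $u\in\mathcal U_{ad}$, where $p_s\in H^2(\Omega)\cap H^1_0(\Omega)$ is the static adjoint state. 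The candidate storage function is then the \emph{linear} functional $S(y)=-\langle p_s,y\rangle$, which is the natural storage function produced by the Lagrange multiplier $p_s$.

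The key step is to establish the strict dissipation inequality. Differentiating $S$ along an admissible trajectory and integrating by parts (this is where $p_s\in H^2\cap H^1_0$ is used) gives $\frac{d}{dt}S(y(t))=\langle y_d-y_s,y(t)\rangle+\langle\chi_{\mathcal D}p_s,u(t)\rangle$, since $\Delta p_s=y_d-y_s$. Expanding the supply rate $\omega(y,u)=\frac12\|y-y_d\|^2+\frac12\|u\|^2-\frac12\|y_s-y_d\|^2-\frac12\|u_s\|^2$ around $(y_s,u_s)$ and subtracting $\frac{d}{dt}S$, the cross terms reorganize into $\langle u_s+\chi_{\mathcal D}p_s,\,u-u_s\rangle$, while the constant terms cancel by the state and adjoint equations (using $\langle\Delta p_s,y_s\rangle=\langle p_s,\Delta y_s\rangle=-\langle\chi_{\mathcal D}p_s,u_s\rangle$). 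The variational inequality renders the first bracket nonnegative, leaving
$$\omega(y,u)-\tfrac{d}{dt}S(y(t))\geq\tfrac12\big(\|y-y_s\|^2+\|u-u_s\|^2\big),$$
which is exactly the strict dissipation inequality \eqref{5225} at $(y_s,u_s)$ with rate $\alpha(\gamma)=\tfrac12\gamma^2$.

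It then remains to convert this into the two claimed conclusions uniformly in $T$. Applying the strict dissipation inequality to the optimal pair $(y^T(\cdot),u^T(\cdot))$ over $[0,T]$ yields
$$\int_0^T\tfrac12\big(\|y^T(t)-y_s\|^2+\|u^T(t)-u_s\|^2\big)\,dt\leq T(\bar J^T-\bar J_s)+S(y^T(0))-S(y^T(T)).$$
To bound the right-hand side by a constant independent of $T$, I would: (a) use the (constrained, $L^\infty$-)null controllability of the heat equation as in \cite{wang} to steer $y_0$ to the equilibrium $y_s$ in a fixed time and then apply $u_s$, producing an admissible competitor and hence $\bar J^T\leq\bar J_s+O(1/T)$, so that $T(\bar J^T-\bar J_s)=O(1)$; and (b) use the energy estimate $\frac{d}{dt}\tfrac12\|y\|^2\leq-\lambda_1\|y\|^2+\|u\|\,\|y\|$ together with the uniform $L^2(\Omega)$-bound on $u^T(t)$ coming from $u^T(t)\in\mathcal U_{ad}$ to bound $\|y^T(T)\|$ uniformly in $T$, whence the linear term $S(y^T(T))=-\langle p_s,y^T(T)\rangle$ is uniformly bounded and $S(y^T(0))=-\langle p_s,y_0\rangle$ is a fixed constant. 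Consequently the left-hand integral is $O(1)$ uniformly in $T$. The measure estimate follows from Markov's inequality: on $Q_{\varepsilon,T}$ the integrand exceeds $\varepsilon/2$, whence $|Q_{\varepsilon,T}|\leq O(1/\varepsilon)$ for all $T\geq1$. The Ces\`aro convergence follows from Cauchy--Schwarz, since $\|\frac1T\int_0^T(y^T-y_s)\,dt\|\leq(\frac1T\int_0^T\|y^T-y_s\|^2\,dt)^{1/2}=O(T^{-1/2})\to0$, and likewise for $u^T$.

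The main obstacle I anticipate is step (a): producing an admissible trajectory that reaches the steady state $y_s$ in finite time while respecting the pointwise constraints $u_a\leq u\leq u_b$ and keeping the transient cost bounded, which requires constrained (null-)controllability rather than its unconstrained version, with some care so that the connecting control together with $u_s$ remains in $\mathcal U_{ad}$. A secondary technical point is justifying the integration by parts defining $\frac{d}{dt}S$, which rests on the elliptic regularity $p_s\in H^2(\Omega)\cap H^1_0(\Omega)$ and on the mild-solution formulation of the state equation.
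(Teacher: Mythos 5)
Your overall strategy coincides with the paper's: extract from the KKT system of the static problem a strict dissipation inequality with the linear storage function $S(y)=-\langle p_s,y\rangle$ (the paper phrases this as the exact quadratic expansion \eqref{jia1} of the Lagrangian; your variational-inequality bookkeeping is the same computation), integrate it along the optimal pair, bound the boundary terms by the energy estimate, and conclude by Markov's inequality and Cauchy--Schwarz. Your step (b), the Markov step and the Ces\`aro step are exactly the paper's Steps~2 and~4.

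The one genuine problem is your step (a). You propose to prove $\bar J^T\leq\bar J_s+O(1/T)$ by steering $y_0$ exactly to the steady state $y_s$ in finite time with a control respecting the pointwise constraints $u_a\leq u\leq u_b$, invoking constrained $L^\infty$-null controllability. This is both unavailable in general and unnecessary. It is unavailable because exact controllability to an arbitrary steady state $y_s$ with controls confined to $\mathcal U_{ad}$ can fail outright (take $u_a,u_b$ so that $\mathcal U_{ad}$ is a thin box around $u_s$: the set reachable in finite time from $y_0$ need not contain $y_s$); the result of \cite{wang} gives null controllability with an $L^\infty$ bound on the control that one does not get to prescribe in advance, so it does not produce a connecting control lying in a given box. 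It is unnecessary because the \emph{constant} control $u(t)\equiv u_s$ is already admissible for every $T$, and the corresponding state satisfies $y(t;u_s)-y_s=e^{t\Delta}(y_0-y_s)$, which decays exponentially thanks to the spectral gap $\lambda_1>0$; hence $\frac{1}{2T}\int_0^T\big(\|y(t;u_s)-y_d\|^2+\|u_s\|^2\big)\,dt=\bar J_s+O(1/T)$, which is precisely the paper's inequality \eqref{iii4}. Replacing your step (a) by this one-line competitor closes the argument. (A harmless slip elsewhere: your displayed formula for $\frac{d}{dt}S$ has the wrong sign relative to $S(y)=-\langle p_s,y\rangle$, but the strict dissipation inequality you state at the end of that paragraph is the correct one, as one checks from $-\langle y_s-y_d,y_s\rangle=\langle p_s,\Delta y_s\rangle=-\langle\chi_{\mathcal D}p_s,u_s\rangle$ together with the variational inequality.)
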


\begin{proof}
The key point of the proof is to show that
\begin{equation}\label{i7}
\int_0^T\Big(\|y^T(t)-y_s\|^2+\|u^T(t)-u_s\|^2\Big)\,dt\leq C \;\;\;\;\text{for all}\;\;T>0,
\end{equation}
where $C$ is a constant independent of $T$. Once the inequality \eqref{i7} is proved, the desired results hold 
 automatically. To prove \eqref{i7}, the remaining part of the proof is proceeded into several steps as follows.
 
{\bf Step 1}. We first introduce a Lagrangian function for the above stationary problem.
According to the Karusch-Kuhn-Tucker (KKT for short) optimality conditions (see, e.g.,  \cite[Theorem 2.29]{T}),  
there are functions $p_s$, $\mu_a$ and $ \mu_b$ in $ L^2(\Omega)$ such that 
\begin{equation*}(KKT)\;\;\;\left\{
\begin{split}
&-\Delta y_s=\chi_\mathcal D u_s,\;\;\;\; -\Delta p_s=y_d-y_s,      \;\;\text{in}\;\;\Omega,\\
&y_s=0,\;\;\;\;p_s=0,\;\;\text{on}\;\;\partial\Omega,\\
&u_s-\chi_\mathcal D p_s-\mu_a+\mu_b=0,\\
&\mu_a\geq0,\;\;\mu_b\geq0,\;\;\mu_a(u_a-u_s)=\mu_b(u_s-u_b)=0.\\
\end{split}\right.
\end{equation*}
Now,  we define the associated Lagrangian function $L:H_0^1(\Omega)\times L^2(\Omega)\rightarrow \mathbb R$ by setting
\begin{multline}\label{jia2}
L(y,u)=\frac{1}{2}\Big(\|y-y_d\|^2+\|u\|^2\Big)+\langle\nabla y,\nabla p_s\rangle-\langle\chi_\mathcal D u,p_s\rangle\\+
\langle\mu_a,u_a-u\rangle+\langle\mu_b,u-u_b\rangle,\;\;\forall (y,u)\in  H_0^1(\Omega)\times L^2(\Omega).
\end{multline}
From the above-mentioned KKT optimality conditions, we can see that
$$
L(y_s,u_s)=\frac{1}{2}\Big(\|y_s-y_d\|^2+\|u_s\|^2\Big)= \bar J_s,
$$
$$
L_{(y,u)}'(y_s,u_s)\big((y-y_s,u-u_s)\big)=0,
$$
$$
L^{''}_{(y,u)}(y_s,u_s)\big((y-y_s,u-u_s),(y-y_s,u-u_s)\big)=\|y-y_s\|^2+\|u-u_s\|^2.
$$
Since $L$ is a quadratic form, the Taylor expansion is 
  \begin{multline*}
 L(y,u)=L(y_s,u_s)+L_{(y,u)}'(y_s,u_s)\big((y-y_s,u-u_s)\big)\\
 +\frac{1}{2}L^{''}_{(y,u)}(y_s,u_s)\big((y-y_s,u-u_s),(y-y_s,u-u_s)\big),\;\;\forall (y,u)\in H^1_0(\Omega)\times L^2(\Omega),
  \end{multline*}
which means that
\begin{equation}\label{jia1}
L(y,u)=\bar J_s+\frac{1}{2}\Big(\|y-y_s\|^2+\|u-u_s\|^2\Big),\;\;\forall (y,u)\in H^1_0(\Omega)\times L^2(\Omega).
\end{equation}

{\bf Step 2}. 
Noting that $\mu_a\geq0$ and $\mu_b\geq0$, we obtain from \eqref{jia2} and \eqref{jia1} that for each $(y,u)\in H_0^1(\Omega)\times \mathcal U_{ad}$, 
\begin{multline}\label{i2}
\bar J_s+\frac{1}{2}\Big(\|y-y_s\|^2+\|u-u_s\|^2\Big)
\leq 
\frac{1}{2}\Big(\|y-y_d\|^2+\|u\|^2\Big)+\langle\nabla y,\nabla p_s\rangle-\langle\chi_\mathcal D u,p_s\rangle.
\end{multline}
Since  $(y^T(t),u^T(t))\in H^1_0(\Omega)\times \mathcal U_{ad}$ for a.e. $t\in(0,T)$, we get from \eqref{i2} that
\begin{multline*}
\bar J_s+\frac{1}{2}\Big(\|y^T(t)-y_s\|^2+\|u^T(t)-u_s\|^2\Big)
\leq 
\frac{1}{2}\Big(\|y^T(t)-y_d\|^2+\|u^T(t)\|^2\Big)\\+\langle\nabla y^T(t),\nabla p_s\rangle-\langle\chi_\mathcal D u^T(t),p_s\rangle,
\;\;\text{for a.e.}\;\;t\in(0,T).
\end{multline*}
Integrating the above inequality over $(0,T)$ and then multiplying the resulting by $1/T$, we have
\begin{multline}\label{huang1}
\bar J_s+\frac{1}{2T}\int_0^T\Big(\|y^T(t)-y_s\|^2+\|u^T(t)-u_s\|^2\Big)\,dt
\leq 
\bar J^T
+\frac{1}{T}\int_0^T\Big(\langle\nabla y^T(t),\nabla p_s\rangle-\langle\chi_\mathcal D u^T(t),p_s\rangle\Big)\,dt.
\end{multline}
Observe that 
\begin{equation*}\label{lu1}
-\big\langle y^T(T)-y_0,p_s\big\rangle=\int_0^T\Big(\big\langle\nabla y^T(t),\nabla p_s\big\rangle-\langle\chi_\mathcal D u^T(t),p_s\rangle\Big)\,dt.
\end{equation*}
This, along with \eqref{huang1}, implies that
\begin{equation}\label{i3}
\bar J_s+\frac{1}{2T}\int_0^T\Big(\|y^T(t)-y_s\|^2+\|u^T(t)-u_s\|^2\Big)\,dt\leq \bar J^T+
\frac{ \langle y_0-y^T(T),p_s\rangle }{T}.
\end{equation}
By the standard energy estimate for non-homogeneous heat equations, there is a constant $C>0$ (independent of $T>0$) such that
\begin{equation}\label{i5}
\|y^T(T)\|\leq C\Big(\|y_0\|+\max\big\{\|u_a\|,\|u_b\|\big\}\Big)\;\;\;\;\text{for all}\;\; T>0.
\end{equation}
Hence, by the Cauchy-Schwarz inequality we have
$$
\frac{ \langle y_0-y^T(T),p_s\rangle}{T}\leq \frac{C\|p_s\|}{T}\Big(\|y_0\|+\max\big\{\|u_a\|,\|u_b\|\big\}\Big)\leq O\Big(\frac{1}{T}\Big).
$$
This, together with \eqref{i3}, indicates that
\begin{equation}\label{huang2}
\bar J_s+\frac{1}{2T}\int_0^T\Big(\|y^T(t)-y_s\|^2+\|u^T(t)-u_s\|^2\Big)\,dt\leq \bar J^T+O\Big(\frac{1}{T}\Big).
\end{equation}

{\bf Step 3}. 
We claim that 
\begin{equation}\label{i10}
\bar J^T\leq \bar J_s+O\Big(\frac{1}{T}\Big), \;\;\text{when}\;\;T\geq1.
\end{equation}
Indeed, since $u_s$ is always an admissible control for the problem $(P^T)$, it holds that 
\begin{equation}\label{i6}
\bar J^T\leq \frac{1}{2T}\int_0^T\Big(\|y(t;u_s)-y_d\|^2+\|u_s\|^2\Big)\,dt,
\end{equation}
where $y(\cdot;u_s)$ is the solution to 
\begin{equation*}\left\{
\begin{split}
&y_t-\Delta y=\chi_\mathcal D u_s,\;\;\text{in}\;\;\Omega\times(0,T),\\
&y=0,\;\;\text{on}\;\;\partial\Omega\times(0,T),\\
&y(\cdot,0)=y_0,\;\;\text{in}\;\;\Omega.
\end{split}\right.
\end{equation*}
It can be readily checked that 
\begin{equation}\label{iii4}
\frac{1}{2T}\int_0^T\Big(\|y(t;u_s)-y_d\|^2+\|u_s\|^2\Big)\,dt\leq \bar J_s+O\Big(\frac{1}{T}\Big), \;\;\text{when}\;\;T\geq1.
\end{equation}
Which in turn, together with \eqref{i6}, implies that  \eqref{i10}.

{\bf Step 4}. End of the proof for the inequality $\eqref{i7}$.
We obtain immediately from \eqref{huang2} and \eqref{i10} that
$$
\bar J^T=\bar J_s+O\Big(\frac{1}{T}\Big),
$$
as well as
$$
\frac{1}{2T}\int_0^T\Big(\|y^T(t)-y_s\|^2+\|u^T(t)-u_s\|^2\Big)\,dt\leq O\Big(\frac{1}{T}\Big),
$$ 
which is equivalent to the inequality \eqref{i7}.
\end{proof}

\begin{remark}\emph{
Notice that the inequality \eqref{i7} is stronger than the weak turnpike property \eqref{guai1} for $\mathcal T=\{y_s\}$.
The proof above yields the convergence result for the long-time horizon control problems towards
to the steady-state one in the measure-theoretical sense. It is an improved version of
the case of \textit{time-independent} controls \cite[Section 4]{PZ2}.} 
\end{remark}

\begin{remark}\emph{
We remark that, in the steps 2 and 3 of the proof of Proposition~\ref{he}, we have used the exponential stabilization of the heat equation to derive the upper bounds  \eqref{i5} and \eqref{iii4}. See also Remark~\ref{nonc}.}
\end{remark}

\subsection{What is (strict) strong duality }\label{guai3} 
In the above proof of Proposition  \ref{he},  we have seen an important role played by the Lagrangian  \eqref{jia1}, which is closely 
related to the notion of  strict strong duality introduced below. We recall that the notion of strong duality, well known in optimization (see, e.g.,  \cite{BV}).

\begin{definition}
We say that the static problem $(P_s)$ (in Section \ref{dd}) has the \emph{strong duality property} if there exists $\varphi_s\in D(A^*)$ (Lagrangian multiplier) such that $(y_s,u_s)$  minimizes the \emph{Lagrangian function} $L(\cdot,\cdot,\varphi_s): E\times  F\rightarrow \mathbb R$ defined by
$$L(y,u,\varphi_s)= f^0(y,u)+\langle A^*\varphi_s,y\rangle_{X^*,X}+\langle \varphi_s, f(y,u)\rangle_{X^*,X}.$$
We say  $(P_s)$ has the \emph{strict strong duality property} if there exists a $\mathcal K$-class function $\alpha(\cdot)$ such that
$$
L(y,u,\varphi_s)\geq L(y_s,u_s,\varphi_s)+\alpha\big(\|(y-y_s,u-u_s)\|_{X\times U}\big)
$$
for all $(y,u)\in E\times F$.
\end{definition}

\begin{remark}\emph{
Note that $L(y_s,u_s,\varphi_s)=\bar J_s$. 
If $(y_s,u_s)$ is the unique minimizer of the Lagrangian function $L(\cdot,\cdot,\varphi_s)$, and if $E\times F$ is compact in $X\times U$, then $(P_s)$ enjoys the strict strong duality property. However,  it is generally a very strong assumption that $L(\cdot,\cdot,\varphi_s)$ has a unique minimizer.  Note that  uniqueness of minimizers for elliptic
optimal control problems is still a long outstanding and difficult problem (cf., e.g., \cite{T}).}
\end{remark}

In finite dimension, strong duality is introduced and investigated in optimization problems for which the primal and dual problems are equivalent. The notion of strong duality is closely related to the saddle point property of the Lagrangian function associated with the primal optimization problem (see, e.g., \cite{BV,T}).
Note that Slater's constraint qualification (also known as ``interior point'' condition) is a sufficient condition ensuring strong duality for a convex problem, and note that, when the primal problem is convex, the well known Karusch-Kuhn-Tucker conditions are also sufficient conditions ensuring strong duality (see \cite[Chapter 5]{BV}).  
Similar assumptions are also considered for other purposes in the literature (see, for example, \cite[Assumption 1]{CHJ},   \cite[Assumption 4.2 (ii)]{CarlsonBOOK} and  \cite[Assumption 2]{strongduality}.) 

In infinite dimension, however, the usual strong duality theory (for example, the above-mentioned Slater condition) cannot be applied  because the underlying constraint set may have an empty interior. The corresponding strong dual theory, as well as the existence of Lagrange multipliers associated to optimization problems or to variational inequalities, have been developed only quite recently in \cite{Dstrong}.
The strict strong duality property is closely related to the second-order sufficient optimality condition, which guarantees the local 
optimality of $(y_s,u_s)$ for the problem $(P_s)$ (see, e.g., \cite{T}).

We provide hereafter two  examples satisfying the strict strong duality property.

\begin{example}\label{cont}
Consider the static optimal control problem
\begin{equation*}
\qquad \left\{\begin{split}
& \inf \,J_s(y,u)= f^0(y,u), \\
& \text{subject to}\;\;\;\;\;Ay+Bu=0, \\
& y\in E, \quad  u\in F,\\
\end{split}\right.
\end{equation*}
with $A\in\mathbb R^{n\times n}, B\in\mathbb R^{n\times m}$,
$f^0(\cdot,\cdot)$ a strictly convex function, $E$ and $F$ convex, bounded and closed subsets of $\mathbb R^n$ and of $\mathbb R^m$, respectively. 

Assume that \emph{Slater's condition} holds, i.e., there exists an interior point $(\tilde{y},\tilde {u})$ of $E\times F$ such that $A\tilde y+B\tilde u=0$.  Recall that the  Lagrangian function $L:E\times F\times\mathbb R^n\rightarrow\mathbb R$  is given  by
$L(y,u,\varphi)= f^0(y,u)+\langle \varphi,Ay+Bu\rangle_{\mathbb R^n}$.
Let $(y_s,u_s)$ be the unique optimal solution.
It follows from the Slater condition that there exists a Lagrangian multiplier  $\varphi_s\in\mathbb R^{n}$ such that (see, e.g., \cite[Section 5.2.3]{BV})
\begin{equation*}\label{5232}
L(y,u,\varphi_s)>L(y_s,u_s,\varphi_s),\;\;\forall (y,u)\in E\times F\setminus\{(y_s,u_s)\}.
\end{equation*}
The strict inequality is due to the strict convexity of the cost function $f^0$. Setting
$$\widetilde{L}(y,u)= L(y,u,\varphi_s)-L(y_s,u_s,\varphi_s),\;\;\forall (y,u)\in E\times F,$$
we have $\widetilde{L}(y_s,u_s)=0$ and $\widetilde{L}(y,u)>0$ for all $(y,u)\in E\times F \setminus\{(y_s,u_s)\}$.

We claim that
\begin{equation}\label{in5211}
\widetilde{L}(y,u)\geq \alpha\big(\|(y-y_s,u-u_s)\|_{\mathbb R^{n+m}}\big),\;\;\forall (y,u)\in E\times F
\setminus\{(y_s,u_s)\},
\end{equation}
for some  $\mathcal K$-class function $\alpha(\cdot)$.
Indeed, since $E\times F$ is  compact  in $ \mathbb R^{n+m}$, without loss of generality, 
we assume that $E \times F\subset B_r(y_s,u_s)$ with $r>0$, where 
$$B_r(y_s,u_s)=\big\{(y,u)\in \mathbb R^{n+m}\ \mid\ \|(y-y_s,u-u_s)\|_{\mathbb R^{n+m}}\leq r\big\}.$$
Since the function $\widetilde{L}(\cdot,\cdot)$ is continuous, we define
\begin{equation*}
\alpha(\gamma)= \inf_{\substack{(y,u)\in E\times F\\
\gamma\leq\|(y-y_s,u-u_s)\|_{\mathbb R^{n+m}}\leq r}}\widetilde{L}(y,u),\;\;\;\;\;\text{when}\;\gamma\in[0,r], \end{equation*}
and $\alpha(\gamma)\equiv\alpha(r)$ when $\gamma>r$.
It is easy to check that the inequality \eqref{in5211} holds with the $\mathcal K$-class function $\alpha(\cdot)$ given above. This means that the static problem has the strict strong duality property.
\end{example}

\medskip

\begin{example}\label{ex1}
Let $\Omega\subset\mathbb R^3$ be a bounded domain with a smooth boundary $\partial \Omega$.
Given any $y_d\in L^2(\Omega)$, we consider the static optimal control problem
$$
\;\;\;\;\inf \frac{1}{2}\big(\|y-y_d\|^2_{L^2(\Omega)}+\|u\|^2_{L^2(\Omega)}\big),
$$
over all $(y,u)\in H^{1}_{0}(\Omega)\times L^2(\Omega)$ satisfying
\begin{equation*}
\left\{
\begin{split}
&-\triangle y+y^3=u\;\;&\text{in}\;\;\Omega,\\
&y=0\;\;&\text{on}\;\;\partial\Omega.
\end{split}
\right.
\end{equation*}
Let $(y_s,u_s)$ be an optimal solution of this problem. According to first-order necessary optimality conditions (see, e.g., \cite[Chapter 1]{Kun1} or \cite[Chapter 6, Section 6.1.3]{T}), there exists an adjoint state $\varphi_s\in H^2(\Omega)\cap H^1_0(\Omega)$ satisfying
\begin{equation*}
\left\{
\begin{split}
&-\triangle \varphi_s+3y_s^2\varphi_s=y_s-y_d\;\;&\text{in}\;\;\Omega,\\
&\varphi_s=0\;\;&\text{on}\;\;\partial\Omega ,
\end{split}
\right.
\end{equation*}
such that $u_s=\varphi_s$. Moreover, since $\varphi_s$ is a Lagrangian multiplier associated with $(y_s,u_s)$ for the Lagrangian function $L(\cdot,\cdot,\varphi_s): H^1_0(\Omega)\times L^2(\Omega) \rightarrow\mathbb R$ defined by 
\begin{equation*}
L(y,u,\varphi_s)=\frac{1}{2}\big(\|y-y_d\|^2_{L^2(\Omega)}+\|u\|^2_{L^2(\Omega)}\big)+\langle -\triangle\varphi_s, y\rangle_{L^2(\Omega),L^2(\Omega)}+
\langle \varphi_s, y^3-u\rangle_{L^2(\Omega),L^2(\Omega)},
\end{equation*}
we have
\[
L(y_s,u_s,\varphi_s)\leq L(y,u,\varphi_s), \;\;\forall (y,u)\in H_0^1(\Omega)\times L^2(\Omega). 
\]
It means that $(P_s)$ has the strong duality property.

Next, we claim that it holds the strict strong duality property under the condition that 
$\|y_d\|_{L^2(\Omega)}$ is small enough. 
Notice that 
$$\frac{1}{2}\big(\|y_s-y_d\|^2_{L^2(\Omega)}+\|u_s\|^2_{L^2(\Omega)}\big)\leq \frac{1}{2}\|y_d\|^2_{L^2(\Omega)}.$$
Now, assuming that the norm of the target $y_d$ is small enough guarantees the smallness of $(y_s,u_s)$, which consequently belongs to a ball $B_r$ in $H_0^1(\Omega)\times L^2(\Omega)$, centered at the origin and with a small radius $r>0$. Moreover, by elliptic regularity, we deduce that the norms of $y_s$ and $\varphi_s$ are small in $ H^2(\Omega)\cap L^\infty(\Omega)$ (see \cite[Section 3]{PZ2}). 
For the Lagrangian function $L(\cdot,\cdot,\varphi_s)$ defined above,
its first-order Fr\'echet derivative  is
\begin{equation}\label{c4291}
L'(y_s,u_s,\varphi_s)\left((y-y_s,u-u_s)\right)=0,
\end{equation}
and its second-order Fr\'echet derivative is
\begin{multline}\label{c4292}
L''(y_s,u_s,\varphi_s)\left((y-y_s,u-u_s),(y-y_s,u-u_s)\right)\\
=\|y-y_s\|^2_{L^2(\Omega)}+\|u-u_s\|^2_{L^2(\Omega)}+6\int_\Omega y_s\varphi_s(y-y_s)^2\,dx,
\end{multline}
whenever $(y,u)\in B_r$ (see, for instance, \cite[Chapter 6, pp. 337-338]{T}).
Note that
\begin{multline*}
\begin{split}
L(y,u,\varphi_s)&=L(y_s,u_s,\varphi_s)+L'(y_s,u_s,\varphi_s)\left((y-y_s,u-u_s)\right) \\
&\;\;\;+L''(y_s,u_s,\varphi_s)\left((y-y_s,u-u_s),(y-y_s,u-u_s)\right)\\
&\;\;\;+o(\|y-y_s\|_{L^2(\Omega)}^2+\|u-u_s\|^2_{L^2(\Omega)}),
\end{split}
\end{multline*}
for all $(y,u)\in B_r$. This, together with
\eqref{c4291}, \eqref{c4292} and the smallness of $(y_s,\varphi_s)$ in $L^\infty(\Omega)$,
implies that
\begin{equation*}
L(y,u,\varphi_s)\geq L(y_s,u_s,\varphi_s)+\frac{1}{2}(\|y-y_s\|_{L^2(\Omega)}^2+\|u-u_s\|^2_{L^2(\Omega)}), \;\;\;\forall (y,u)\in B_r,
\end{equation*}
which proves the above claim.
\end{example}

\begin{remark}
\emph{
Similar to second order gap conditions for local optimality \cite{T},  the positive semi-definiteness of Hessian matrix of the Hamiltonian  is a necessary condition for the local optimality, 
while its positive definiteness is a sufficient condition for the local optimality. The latter is also known as  the strengthened
Legendre-Clebsch condition.}

\end{remark}


\subsection{Strict strong duality implies strict dissipativity}\label{guai4} 

In this subsection,  
by means of strict strong duality, we extend  Proposition \ref{he} to  general optimal control problems. 
More precisely,
we  establish sufficient conditions, in terms of (strict) strong duality for $(P_s)$, under which (strict)  dissipativity  holds true with a specific storage function for $(\bar P_{[0,T]})$  in Section \ref{dd}.  As seen in Theorem~\ref{turnpikeproperty1}, strict dissipativity implies measure-turnpike. 

\begin{theorem}\label{equiv}
Let $E$ be a bounded subset of $X$. Then,
strong duality (resp., strict strong duality) for  $(P_s)$ implies dissipativity (resp., strict dissipativity) for $(\bar P_{[0,T]})$, with the storage function given by $S(y)=-\langle \varphi_s,y\rangle_{X^*,X}$ for every $y\in E$.  Consequently,  $(\bar P_{[0,T]})$ has the measure-turnpike property under the strict strong duality property.
\end{theorem}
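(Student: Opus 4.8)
The plan is to connect the strong duality property of the static problem $(P_s)$ directly to the dissipation inequality for the dynamic problem $(\bar P_{[0,T]})$, using the candidate storage function $S(y)=-\langle\varphi_s,y\rangle_{X^*,X}$. First I would write out the Lagrangian inequality coming from strong duality: for every $(y,u)\in E\times F$,
\[
f^0(y,u)+\langle A^*\varphi_s,y\rangle_{X^*,X}+\langle\varphi_s,f(y,u)\rangle_{X^*,X}\geq L(y_s,u_s,\varphi_s)=\bar J_s.
\]
The key observation is that, along any admissible trajectory $(y(\cdot),u(\cdot))$, the term $\langle A^*\varphi_s,y(t)\rangle_{X^*,X}+\langle\varphi_s,f(y(t),u(t))\rangle_{X^*,X}$ is exactly the integrand appearing in the weak (mild) formulation of the state equation, since $\varphi_s\in D(A^*)$.

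The central step is therefore to integrate this pointwise inequality in time and recognize a telescoping (total-derivative) structure. Using the mild formulation recalled in Section~\ref{dd}, namely
\[
\langle\varphi_s,y(\tau)\rangle_{X^*,X}-\langle\varphi_s,y(0)\rangle_{X^*,X}=\int_0^\tau\Big(\langle A^*\varphi_s,y(t)\rangle_{X^*,X}+\langle\varphi_s,f(y(t),u(t))\rangle_{X^*,X}\Big)\,dt,
\]
I would substitute to obtain, for each $\tau\in[0,T]$,
\[
\int_0^\tau \Big(f^0(y(t),u(t))-\bar J_s\Big)\,dt\geq -\big(\langle\varphi_s,y(\tau)\rangle_{X^*,X}-\langle\varphi_s,y(0)\rangle_{X^*,X}\big)=S(y(\tau))-S(y(0)).
\]
Since $\omega(y,u)=f^0(y,u)-f^0(y_s,u_s)=f^0(y,u)-\bar J_s$, this is precisely the dissipation inequality \eqref{5224} with the claimed storage function. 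For the strict case, strict strong duality adds the term $\alpha(\|(y-y_s,u-u_s)\|_{X\times U})$ on the right-hand side of the pointwise Lagrangian inequality, which integrates directly into the extra dissipation-rate integral required in \eqref{5225}. Boundedness of $E$ guarantees that $S$ is bounded on $E$ (hence locally bounded and bounded from below, as $\varphi_s$ is fixed), so $S$ is an admissible storage function.

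Finally, once (strict) dissipativity is established, the measure-turnpike conclusion is immediate by invoking Theorem~\ref{turnpikeproperty1}(ii) with $E$ bounded. The main obstacle I anticipate is purely one of regularity and rigor in the integration-by-parts step: the weak formulation holds for test functions in $D(A^*)$, so one must be careful that $\varphi_s\in D(A^*)$ (which is built into the definition of strong duality) legitimately allows the substitution, and that the mild-solution identity is applied on the generic subinterval $[0,\tau]$ rather than only on $[0,T]$. No convexity or smoothness of $f^0$ or $f$ is needed beyond what is already assumed, since the entire argument is a pointwise inequality integrated against an exact total derivative; this is what makes strong duality a clean sufficient condition for dissipativity.
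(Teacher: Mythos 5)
Your proposal is correct and follows essentially the same route as the paper's proof: integrate the pointwise Lagrangian inequality from (strict) strong duality along an admissible trajectory, use the weak formulation of the state equation (valid since $\varphi_s\in D(A^*)$) to telescope the linear terms into $\langle\varphi_s,y(\tau)\rangle_{X^*,X}-\langle\varphi_s,y(0)\rangle_{X^*,X}$, and read off the (strict) dissipation inequality with $S(y)=-\langle\varphi_s,y\rangle_{X^*,X}$, the boundedness of $E$ ensuring $S$ is locally bounded and bounded below. The final appeal to Theorem~\ref{turnpikeproperty1} for the measure-turnpike conclusion is also exactly what the paper does.
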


\begin{proof}
It suffices to prove that strong duality for $(P_s)$ implies dissipativity for $(\bar P_{[0,T]})$ (the proof with the ``strict" additional property is similar with only minor modifications).

By the definition of strong duality, there exists a Lagrangian multiplier $\varphi_s\in D(A^*)$
such that $L(y_s,u_s,\varphi_s)\leq L(y,u,\varphi_s)$ for all $(y,u)\in E\times F$, which means that
\begin{equation*}
f^0(y_s,u_s)\leq f^0(y,u)+\langle A^*\varphi_s,y\rangle_{X^*,X}+\langle \varphi_s,f(y,u)\rangle_{X^*,X}\qquad\forall(y,u)\in E\times F.
\end{equation*}
Let $T>0$. Assume that $(y(\cdot),u(\cdot))$ is  an admissible pair for the problem  $(\bar P_{[0,T]})$. Then,
\begin{equation*}
f^0(y_s,u_s)\leq f^0(y(t),u(t))+
\langle A^*\varphi_s,y(t)\rangle_{X^*,X}+\langle \varphi_s,f(y(t),u(t))\rangle_{X^*,X},\;\;\text{for a.e.}\ t\in[0,T].
\end{equation*}
Integrating the above inequality over $(0,\tau)$, with $0<\tau\leq T$, leads to 
\begin{equation}\label{ma1}
\tau f^0(y_s,u_s)\leq \int_0^\tau f^0(y(t),u(t))\,dt+\int_0^\tau\langle A^*\varphi_s,y(t)\rangle_{X^*,X}\,dt
+ \int_0^\tau \langle \varphi_s,f(y(t),u(t))\rangle_{X^*,X}\,dt.
\end{equation}
Notice that $(y(\cdot), u(\cdot))$ satisfies the state  equation in the problem $(\bar P_{[0,T]})$, we have
\begin{equation*}
\int_0^\tau\langle A^*\varphi_s,y(t)\rangle_{X^*,X}\,dt
+ \int_0^\tau \langle \varphi_s,f(y(t),u(t))\rangle_{X^*,X}\,dt
=\langle \varphi_s,y(\tau)\rangle_{X^*,X}-\langle \varphi_s,y(0)\rangle_{X^*,X}.
\end{equation*}
This, together with \eqref{ma1}, leads to 
\begin{equation*}
 \int_0^\tau \Big(f^0(y(t),u(t))-f^0(y_s,u_s)\Big)\,dt+\langle \varphi_s, y(\tau)\rangle_{X^*,X}
\geq \langle \varphi_s,y(0)\rangle_{X^*,X}.
\end{equation*}
Set $S(y)=-\langle \varphi_s,y\rangle_{X^*,X}$ for every $y\in E$. Since $E$ is a bounded subset of $X$,  we see that $S(\cdot)$ is locally bounded and bounded from below. 
Therefore, we infer that $\{(\bar P_{[0,T]})\,\mid\,T>0\}$ has the dissipativity property.
\end{proof}

\begin{remark}
\emph{Strong duality and dissipativity are equivalent in some situations:}
\begin{itemize}
\item \emph{On one hand, we proved above that  strong duality (resp. strict strong duality) implies dissipativity (resp., strict dissipativity). We refer also the reader to \cite[Lemma 3]{Faulwasser1} for a closely related result. }

\item  \emph{On the other hand, it is easy to see that, if the storage function $S(\cdot)$ is continuously Fr\'echet differentiable, then strong duality (resp., strict strong duality) is the infinitesimal version of the dissipative inequality \eqref{5224} (resp., of \eqref{5225}).
For this point, we also mention that \cite[Assumption 5.2]{Grune2}  is a discrete version of strict dissipativity, and  that \cite[Inequality (14)]{Faulwasser1} is the infinitesimal version of strict dissipativity for the continuous system when the storage function is differentiable.}
\end{itemize}
\end{remark}



\section{Conclusions and further comments}\label{consec}
In this paper,  we first have proved a general turnpike phenomenon around a set holds 
for optimal control problems with terminal state constraints in an abstract framework.
Next, we have obtained the following auxiliary result:
\begin{quote}
strict strong duality $\Rightarrow$ strict dissipativity $\Rightarrow$ measure-turnpike property.
\end{quote}
We have also used dissipativity to identify the long-time limit of optimal values.

\medskip

Now, several comments and perspectives are in order.
\paragraph{Measure-turnpike versus exponential turnpike.}
In the paper \cite{TZZ}, we establish the exponential turnpike property for general classes of optimal control problems in infinite dimension that are similar to the problem $(\bar P_{[0,T]})$ investigated in the present paper, but with the following differences:
\begin{itemize}
\item[(i)] $E=X$ and $F=U$;
\item[(ii)] $y(0)=y_0\in X$.
\end{itemize}
The item (i) means that, in \cite{TZZ}, we consider optimal control problems without any state or control constraint. Under the additional assumption made in (ii), we are then able to apply the \emph{Pontryagin maximum principle} in Banach spaces (see \cite{LiXunjing}), thus obtaining an extremal system that is \emph{smooth}, which means in particular that the extremal control is a \emph{smooth} function of the state and of the adjoint state. This smooth regularity is crucial in the analysis done in \cite{TZZ} (see also \cite{TZ1}), consisting of linearizing the extremal system around an equilibrium point, which is itself the optimal solution of an associated static optimal control problem, and then of analyzing from a spectral point of view the hyperbolicity properties of the resulting linear system. Adequately interpreted, this implies the local exponential turnpike property, saying that
$$
\left\Vert y^T(t)-y_s\right\Vert_X+\left\Vert u^T(t)-u_s\right\Vert_U+\left\Vert \lambda^T(t)-\lambda_s\right\Vert_X\leq c \left( e^{-\mu t}+e^{-\mu(T-t)} \right) ,
$$
for every $t\in[0,T]$, for some constants $\mu,c>0$ not depending on $T$, where $\lambda^T$ is the adjoint state coming from the Pontryagin maximum principle.
There are many examples of control systems for which the measure-turnpike holds but not exponential turnpike (see, for instance, Example ~\ref{cont}).
The exponential turnpike property is much stronger than the measure-turnpike property, not only because it gives an exponential estimate on the control and the state, instead of the softer estimate in terms of Lebesgue measure, but also because it gives the closeness property for the adjoint state. This leads us to the next comment.

\red{}

\paragraph{Turnpike on the adjoint state.}
As mentioned above, the exponential turnpike property established in \cite{TZZ} holds as well for the adjoint state coming from the application of the Pontryagin maximum principle. This property is particularly important when one wants to implement a numerical shooting method in order to compute the optimal trajectories. Indeed, the exponential closeness property of the adjoint state to the optimal static adjoint allows one to successfully initialize a shooting method, as chiefly explained in \cite{TZ1} where an appropriate modification and adaptation of the usual shooting method has been described and implemented.

The flaw of the linearization approach developed in \cite{TZZ} is that it does not a priori allow to take easily into account some possible control constraints (without speaking of state constraints).

The softer approach developed in the present paper leads to the weaker property of measure-turnpike, but permits to take into account some state and control constraints.

However, under the assumption (ii) above, one can as well apply the Pontryagin maximum principle, and thus obtain an adjoint state $\lambda^T$. Due to state and control constraints, of course, one cannot expect that the extremal control $u^T$ be a smooth function of $y^T$ and $\lambda^T$, but anyway our approach by dissipativity is soft enough to yield the measure-turnpike property for the optimal state $y^T$ and for the optimal control $u^T$. Now, it is an open question to know whether the measure-turnpike property holds or not for the adjoint state $\lambda^T$.
As mentioned above, having such a result is particularly important in view of numerical issues.


\paragraph{Local versus global properties.}
It is interesting to stress on the fact that Theorem \ref{turnpikeproperty1} (saying that strict dissipativity implies measure-turnpike) is of \emph{global} nature, whereas Theorem \ref{equiv} (saying that strict strong duality implies strict dissipativity) is rather of \emph{local} nature. This is because, as soon as Lagrangian multipliers enter the game, except under strong convexity assumptions this underlies that one is performing reasonings that are local, such as applying first-order conditions for optimality. Therefore, although Theorem \ref{equiv} provides a sufficient condition ensuring strict dissipativity and thus allowing one to apply the result of Theorem \ref{turnpikeproperty1}, in practice showing strict strong duality can in general only be done locally. In contrast, dissipativity is a much more general property, which is global in the sense that it reflects a global qualitative behavior of the dynamics, as in the Lyapunov theory. We insist on this global picture because this is also a big difference with the results of \cite{TZZ,TZ1} on exponential turnpike, that are purely local and require smallness conditions. Here, in the framework of Theorem \ref{turnpikeproperty1}, no smallness condition is required. The price to pay however is that one has to know a storage function, ensuring strict dissipativity. In practical situations this is often the case and storage functions often represent an energy that has a physical meaning.

\paragraph{Semilinear heat equation.}
We end the paper with a still open problem, related to the above-mentioned smallness condition. Continuing with  Example \ref{ex1}, given any $y_d\in L^2(\Omega)$ we consider the evolution optimal control problem
\begin{equation*}\label{semilinheat1}
\;\;\;\inf\, \frac{1}{2T}\int_0^T \left( \|y(t)-y_d\|^2_{L^2(\Omega)}+\|u(t)\|_{L^2(\Omega)}^2\right) dt
\end{equation*}
over all possible solutions of
\begin{equation}\label{semilinheat2}
\left\{
\begin{split}
&y_t-\triangle y+y^3=u\;\;&\text{in}\;\;\Omega\times(0,T),\\
&y=0\;\;&\text{on}\;\;\partial\Omega\times(0,T),
\end{split}
\right.
\end{equation}
such that $(y(t),u(t))\in H_0^1(\Omega)\times L^2(\Omega)$ for almost every $t\in (0,T)$. 
It follows from Example~\ref{ex1} and Theorem~\ref{equiv}  that the  problem  is dissipative
at an optimal  stationary point $(y_s,u_s)$ with the storage function $S(y)=-\langle \varphi_s,y\rangle_{L^2(\Omega),L^2(\Omega)}$. Under the
additional  smallness condition on $\|y_d\|_{L^2(\Omega)}$, the strict strong duality holds and thus 
the measure-turnpike property follows.   
As said above, this assumption reflects the fact that Theorem~\ref{equiv} is rather of a local nature. However, due to the fact that the nonlinear term in \eqref{semilinheat2} has the ``right sign", we do not know how to take advantage of this monotonicity of the control system \eqref{semilinheat2} to infer the measure-turnpike property.
It is interesting to compare this result with \cite[Theorem 3.1]{PZ2}, where the authors used a subtle analysis of optimality systems  to establish an exponential turnpike property, under the same smallness condition. 
The question of whether the turnpike property actually holds or not for optimal solutions \emph{but} without the smallness condition on the target,  is still an interesting open problem.

\bigskip

\noindent \textbf{Acknowledgment}.
We would like to thank Prof. Enrique Zuazua for fruitful discussions and valuable suggestions on this subject. 
We acknowledge the financial support by the grant FA9550-14-1-0214 of the EOARD-AFOSR. The second author was partially supported by the National Natural Science Foundation of China under grants 11501424 and 11371285.

\bigskip


\begin{thebibliography}{99}

\bibitem{Kokotovic} B. D. O. Anderson, P. V. Kokotovic, Optimal control problems over large time intervals, Automatica
23 (1987), 355-363.

\bibitem{AL} Z. Artstein,  A. Leizarowitz, 
Tracking periodic signals with the overtaking criterion, IEEE Transactions on Automatic Control 30 (1985), 1123-1126.  

\bibitem{Barbu} V. Barbu,  N. H. Pavel, Periodic optimal control in Hilbert space, Appl. Math. Optim. 33 (1996), 169-188.

\bibitem{BV} S. P. Boyd,  L. Vandenberghe, Convex Optimization, Cambridge University Press, Cambridge, UK, 2004.

\bibitem{disbook} B. Brogliato, R. Lozano, B. Maschke, O. Egeland, Dissipative Systems Analysis and Control.
Second Edition. Springer-Verlag London, Ltd., London, 2007. 

\bibitem{CHJ} D. A. Carlson, A. Haurie,  A. Jabrane,
Existence of overtaking solutions to infinite-dimensional control problems on unbounded time intervals. 
SIAM J. Control Optim. 25 (1987), 1517-1541. 

\bibitem{CarlsonBOOK} D. A. Carlson, A. B. Haurie,  A. Leizarowitz, 
Infinite Horizon Optimal Control, 2nd ed. New York: Springer Verlag, 1991.

\bibitem{strongduality} M. Diehl, R. Amrit,  J. B. Rawlings,
A Lyapunov function for economic optimizing model predictive control. 
IEEE Trans. Automat. Control 56 (2011), 703-707. 

\bibitem{Grune1} T. Damm, L. Gr\"une,  M. Stieler,  K. Worthmann, An exponential turnpike theorem for dissipative discrete time optimal control problems. SIAM J. Control Optim. 52  (2014), 1935-1957.


\bibitem{Dstrong} P. Daniele,  S. Giuffr\`{e},  G. Idone,  A.  Maugeri, Infinite dimensional duality and applications. Math. Ann. 339 (2007), 221-239.

\bibitem{DorfmanSamuelsonSolow}
R. Dorfman, P.A. Samuelson, R. Solow, Linear Programming and Economic Analysis,
New York, McGraw-Hill, 1958.


\bibitem{Faulwasser1} T. Faulwasser, M. Korda, C. N. Jones, D. Bonvin, On turnpike and dissipativity properties of continuous-time optimal control problems.  To appear in Automatica.  DOI: 10.1016/j.automatica.2017.03.012.
 

\bibitem{Grune2} L. Gr\"une, Economic receding horizon control without terminal constraints. Automatica, 49
 (2013), 725-734.
 
 
 \bibitem{G3} L. Gr\"une, M. M\"uller, On the relation between strict dissipativity and turnpike properties. Systems and Control Letters 90 (2016),  45-53.
  
 
\bibitem{GR} V. Gaitsgory, S. Rossomakhine, Linear programming approach to deterministic long 
run average problems of optimal control. SIAM J. Control Optim. 44 (2006), 2006-2037. 

\bibitem{GTZ} M. Gugat, E. Tr\'elat, E. Zuazua, Optimal Neumann control for the 1D wave equation: Finite horizon, infinite horizon, boundary tracking terms and the turnpike property. Systems and Control Letters 90 (2016), 61-70.


\bibitem{Kun1}
K. Ito, K. Kunisch, Lagrange Multiplier Approach to Variational Problems and Applications, Advances in Design and Control, 15, Society for Industrial and Applied Mathematics (SIAM), Philadelphia, PA, 2008.

\bibitem{LiXunjing} X. Li, J. Yong, Optimal Control Theory for Infinite-dimensional Systems. Systems and Control: Foundations and Applications, Birkh\"auser Boston, Inc., Boston, MA, 1995.

\bibitem{LWei}
H. Lou, W. Wang, Turnpike properties of optimal relaxed control problems. Preprint.


\bibitem{Mc} L. W. Mckenzie, Turnpike theorems for a generalized Leontief model, Econometrica 31 (1963), 165-180.

\bibitem{Pa} A. Pazy, Semigroups of Linear Operators and Applications
to Partial Differential Equations, Springer-Verlag, 1983.



 \bibitem{PZ1} A. Porretta, E. Zuazua, Long time versus steady state optimal control. 
SIAM J. Control and Optim. 51 (2013), 4242-4273.

\bibitem{PZ2} A. Porretta, E. Zuazua, Remarks on long time versus steady state optimal control. Mathematical Paradigms of Climate Science, Springer International Publishing, 2016: 67-89.

\bibitem{QR} M. Quincampoix,  J. Renault, 
On the existence of a limit value in some nonexpansive optimal control problems. 
SIAM J. Control Optim. 49 (2011), 2118-2132. 

\bibitem{Rapaport}
A. Rapaport, P. Cartigny,
Turnpike theorems by a value function approach,
ESAIM: Control Optim. Calc. Var. {10} (2004), 123--141.


\bibitem{Sa} P. A. Samuelson, The periodic turnpike theorem, Nonlinear Anal. 1 (1976), no. 1, 3-13.


\bibitem{T} F. Tr\"oltzsch, Optimal Control of Partial Differential Equations. Theory, Methods and Applications.
Graduate Studies in Mathematics, 112,  American Mathematical Society,  2010. 

\bibitem{TZZ} E. Tr\'elat, C. Zhang, E. Zuazua, Steady-state and periodic exponential turnpike property for optimal control problems in Hilbert spaces.
https://arxiv.org/abs/1610.01912.

\bibitem{TZ1} E. Tr\'elat, E. Zuazua, The turnpike property in finite-dimensional nonlinear optimal control.
J. Differential Equations 258 (2015), 81-114.

\bibitem{wang} G. Wang, $L^\infty$-Null controllability for the heat equation and its consequences for the time optimal control problem, SIAM J. Control Optim., 47 (2008), 1701--1720.

 
\bibitem{Willems} J. C. Willems, Dissipative dynamical systems, Part I: General theory. Arch. Ration. Mech. Anal.,
 45 (1972), 321-351.
 
\bibitem{Z1} A. J. Zaslavski,  Existence and structure of optimal solutions of infinite-dimensional control problems. 
Appl. Math. Optim. 42 (2000), 291-313.

\bibitem{Z2} A. J. Zaslavski, Turnpike Properties in the Calculus of Variations and Optimal Control. Vol. 80. Springer, 2006.
 
\bibitem{Z3}  A. J. Zaslavski,
Turnpike Theory of Continuous-time Linear Optimal Control Problems. 
Springer Optimization and Its Applications, 104. Springer, Cham, 2015. 

\bibitem{Zon}
M. Zanon, L. Gr\"une, M. Diehl, Periodic optimal control, dissipativity and MPC. IEEE Transactions on Automatic Control, 2016.

 \end{thebibliography}
\end{document}